\pdfoutput=1
\RequirePackage{silence}
\documentclass[a4paper,11pt]{amsart}
\usepackage[hmarginratio={1:1},vmarginratio={1:1},lmargin=60.0pt,tmargin=60.0pt]{geometry}

\allowdisplaybreaks

\usepackage[numbers]{natbib}
\usepackage[utf8]{inputenc}

\usepackage{latexsym,exscale,mathtools,textcomp}
\usepackage{amssymb,amsmath,amsthm,amsfonts,mathrsfs,bbm,enumitem,stmaryrd}
\usepackage{blkarray}
\usepackage[table]{xcolor}
\usepackage{graphicx}
\usepackage{mathtools}
\usepackage{ytableau}
\usepackage{booktabs}

\usepackage{xparse}

\usepackage{dynkin-diagrams}

\setlist[enumerate]{itemsep=0.15cm,label=\emph{\upshape(\alph*)}}
\setlist[enumerate,2]{itemsep=0.15cm,label=\emph{\upshape(\roman*)}}

\usepackage{array}
\newcolumntype{C}{>{$}c<{$}}

\definecolor{mygray}{gray}{0.6}
\definecolor{mygraydark}{gray}{0.4}
\definecolor{mygraylight}{gray}{0.85}
\definecolor{spinach}{RGB}{46,139,87}
\definecolor{tomato}{RGB}{255,99,71}
\definecolor{orchid}{RGB}{143,40,194}
\definecolor{neon}{RGB}{77,77,255}
\definecolor{pumpkin}{RGB}{224,180,80}
\definecolor{citron}{RGB}{190,180,90}

\definecolor{lava}{RGB}{207,16,32}
\definecolor{cream}{RGB}{255,253,208}
\definecolor{verdigris}{RGB}{67,179,174}
\definecolor{Black}{RGB}{0,0,0}
\definecolor{mydarkblue}{RGB}{10,10,170}
\definecolor{darkspinach}{RGB}{20,70,20}
\definecolor{darktomato}{RGB}{155,40,30}
\definecolor{darkorchid}{RGB}{50,10,100}
\definecolor{darklava}{RGB}{150,8,16}

\usepackage{todonotes}

\let\emph\relax
\DeclareTextFontCommand{\emph}{\bfseries\em}

\newcommand{\placeholder}{{}_{-}}

\renewcommand{\dots}{\text{...}}

\let\<=\langle
\let\>=\rangle

\renewcommand{\dots}{\text{...}}

\DeclarePairedDelimiterX{\set}[1]{\{}{\}}{\setargs{#1}}
\NewDocumentCommand{\setargs}{>{\SplitArgument{1}{|}}m}{\setargsaux#1}
\NewDocumentCommand{\setargsaux}{mm}
{\IfNoValueTF{#2}{#1} {#1\,\delimsize|\,\mathopen{}#2}}

\usepackage[all]{xy}
\usepackage{tikz}
\usetikzlibrary{cd}
\usetikzlibrary{decorations}
\usetikzlibrary{decorations.markings}
\usetikzlibrary{decorations.pathreplacing}
\usetikzlibrary{decorations.pathmorphing}
\usetikzlibrary{arrows.meta,shapes,positioning,matrix,calc}
\usetikzlibrary{shapes.callouts}
\usetikzlibrary{shapes.arrows}
\usetikzlibrary{tqft}
\usetikzlibrary{scopes,intersections}

\tikzset{
anchorbase/.style={baseline={([yshift=#1]current bounding box.center)}},
anchorbase/.default={-0.5ex},
tinynodes/.style={font=\tiny,text height=0.25ex,text depth=0.05ex},
smallnodes/.style={font=\scriptsize,text height=0.75ex,text depth=0.15ex},
mor/.style={line width=0.75,color=black,fill=cream},
mor2/.style={line width=0.75,color=black,fill=tomato},
mor3/.style={line width=0.75,color=black,fill=spinach},
usual/.style={line width=1.2,color=black},
crossline/.style={preaction={draw=white,line width=5.0pt,-},preaction={draw=black,line width=0.9pt,-}},
hol/.style = {
decoration={markings,
post length=0.25mm,
pre length=0.25mm,
mark=at position #1 with {\node[circle,radius=0.15cm,inner sep=-1.2pt,draw,color=black,fill=white]{};}
},
postaction={decorate}
},
mob/.style = {
decoration={markings,
post length=0.25mm,
pre length=0.25mm,
mark=at position #1 with {\node[circle,radius=0.15cm,inner sep=-1.2pt,draw, color=tomato,fill=tomato]{};}
},
postaction={decorate}
},
dot/.style = {
decoration={markings,
post length=0.25mm,
pre length=0.25mm,
mark=at position #1 with {\node[circle,radius=0.15cm,inner sep=-1.2pt,color=black,fill=black]{};}
},
postaction={decorate}
},
dot/.default=1,
}
\tikzstyle directed=[postaction={decorate,decoration={markings,
mark=at position #1 with {\arrow[line width=0.25mm, black]{>}}}}]

\let\oldlightning\lightning
\renewcommand{\lightning}{\textcolor{tomato}{\pmb{\oldlightning}}}

\usepackage{aliascnt,etoolbox}
\def\NewTheorem#1{%
\newaliascnt{#1}{equation}%
\newtheorem{#1}[#1]{#1}%
\aliascntresetthe{#1}%
\expandafter\def\csname #1autorefname\endcsname{#1}%
}
\def\equationautorefname~#1\null{(#1)\null}

\numberwithin{equation}{subsection}

\NewTheorem{Proposition}
\NewTheorem{Theorem}
\NewTheorem{Corollary}
\AtEndEnvironment{Corollary}{\null\hfill$\square$}%
\NewTheorem{Lemma}
\theoremstyle{definition}
\NewTheorem{Definition}
\AtEndEnvironment{Definition}{\null\hfill$\Diamond$}%
\NewTheorem{Notation}
\AtEndEnvironment{Notation}{\null\hfill$\Diamond$}%
\NewTheorem{Example}
\AtEndEnvironment{Example}{\null\hfill$\Diamond$}%
\theoremstyle{remark}
\NewTheorem{Remark}
\AtEndEnvironment{Remark}{\null\hfill$\Diamond$}%

\usepackage[hypertexnames=false]{hyperref}
\usepackage{bookmark}
\hypersetup{
pdftoolbar=true,
pdfmenubar=true,
pdffitwindow=false,
pdfstartview={FitH},
pdftitle={RL unknotter, hard unknots and unknotting number},
pdfauthor={Anne Dranowski, Yura Kabkov and Daniel Tubbenhauer},
pdfsubject={},
pdfcreator={Anne Dranowski, Yura Kabkov and Daniel Tubbenhauer},
pdfproducer={Anne Dranowski, Yura Kabkov and Daniel Tubbenhauer},
pdfkeywords={},
pdfnewwindow=true,
colorlinks=true,
linkcolor=mydarkblue,
citecolor=teal,
filecolor=magenta,
urlcolor=orchid,
linkbordercolor=lava,
citebordercolor=teal,
urlbordercolor=orchid,
linktocpage=true
}

\def\makeautorefname#1#2{\csdef{#1autorefname}{#2}}

\makeautorefname{section}{Section}%
\makeautorefname{subsection}{Section}%
\makeautorefname{subsubsection}{Section}%

\begin{document}
\title[RL unknotter, hard unknots and unknotting number]{RL unknotter, hard unknots and unknotting number}
\author[A. Dranowski, Y. Kabkov and D. Tubbenhauer]{Anne Dranowski, Yura Kabkov and Daniel Tubbenhauer}

\address{A.D.: annedranowski@gmail.com}

\address{Y.K.: Neapolis University Pafos, Computer Science and Artificial Intelligence Bachelor program, 2 Danais Avenue, 8042, Pafos, Cyprus}
\email{y.kabkov@nup.ac.cy}

\address{D.T.: The University of Sydney, School of Mathematics and Statistics F07, Office Carslaw 827, NSW 2006, Australia, \href{http://www.dtubbenhauer.com}{www.dtubbenhauer.com}, \href{https://orcid.org/0000-0001-7265-5047}{ORCID 0000-0001-7265-5047}}
\email{daniel.tubbenhauer@sydney.edu.au}

\begin{abstract}
We develop a reinforcement learning pipeline for simplifying knot diagrams.
A trained agent learns move proposals and a value heuristic for navigating Reidemeister moves.
The pipeline applies to arbitrary knots and links; we test it on ``very hard'' unknot diagrams and, using diagram inflation, on $4_1\#9_{10}$ where we investigate the recently established and surprising upper bound of three for the unknotting number. In addition, we explain a self-improving workbook-driven extension of the pipeline that systematically improves unknotting number upper bounds on the prime knots.
\end{abstract}

\subjclass[2020]{Primary: 57K10, 68T05; Secondary: 57K14, 90C59}
\keywords{Diagrammatic knot simplification, reinforcement learning, Reidemeister-move search, hard unknot diagrams, unknotting number.}

\addtocontents{toc}{\protect\setcounter{tocdepth}{1}}

\maketitle

\tableofcontents

\section{Introduction}\label{sec:intro}

We train a reinforcement learning agent that treats knot simplification as search in a huge move graph and learns where to move next.

\subsection{Motivation}\label{sec:contrib}

A knot is a smooth embedding of $S^1$ in $S^3$ (``a closed rope''), and in practice it is often represented by a planar diagram.
Many basic questions in knot theory (deciding whether a diagram is the unknot, simplifying a diagram, searching for short unknotting sequences,\dots) are naturally diagrammatic: one applies local moves to a diagram and hopes to reach a simpler representative.
From a computational perspective this leads to a vast, highly branching move graph whose vertices are diagrams (or PD codes) and whose edges are elementary diagrammatic moves.
Even for moderately sized diagrams, exhaustive search is typically infeasible, and standard deterministic simplification heuristics can get stuck in deep local minima.

This paper explores the use of modern RL as a source of learned heuristics for navigating the move graph of knot diagrams.
Concretely, we formulate a Markov decision process in which states are planar diagram codes and actions are chosen diagrammatic moves (Reidemeister moves together with a small amount of auxiliary ``cleanup'').
We train a neural agent, the \emph{unknotter}, that (i) proposes promising moves and (ii) estimates how close a diagram is to being simplified.
The trained agent can then be used either as a policy (to generate simplification trajectories) or as a heuristic in a broader search procedure.

Our focus is not on certification (e.g.\ proving minimal crossing numbers or unknotting numbers), but on producing effective \emph{diagram-level} evidence and practical workflows.
In particular, the methods developed here apply to any knot diagram given in PD form; throughout the paper we use specific families of diagrams as benchmarks, and we work out one composite example, $4_1\#9_{10}$ (which, famously \cite{BrittenhamHermiller-unknotting-nonadditive,BrittenhamHermiller-unknotting-41-51}, has an unexpected unknotting number; we investigate this computationally), in detail. Finally, we explain a self-improving workbook version of the pipeline, which uses the same diagram level search to improve unknotting number upper bounds in bulk.

\subsection{Contributions}\label{sec:contribintro}

This work is inspired by the amazing paper \cite{ApplebaumBlackwellDaviesEdlichJuhaszLackenbyTomasevZheng-RLunknot}. Our contributions are:

\begin{itemize}
\item \textit{RL environment for planar diagrams.}
We formalize diagram simplification as an RL problem on PD codes: the agent interacts with a move generator, receives a reward shaped by simplification progress, and learns a value estimate that serves as ``distance-to-simplification'' on the move graph.

\item \textit{A trained simplification heuristic (the unknotter).}
We train neural agents that, when run from a given diagram, reliably find simplification trajectories.
As a stress test, we show that the unknotter can unknot the ``very hard'' unknots from \cite{ApplebaumBlackwellDaviesEdlichJuhaszLackenbyTomasevZheng-RLunknot} with high success probability per run (approximately 95\%), despite the large search space and the need for occasional temporary increases in crossing number.

\item \textit{Crossing change search powered by the unknotter.}
We build a general pipeline that, for a fixed knot $K$, starts from an arbitrary diagram of $K$, inflates it (adds crossings without changing the isotopy class), explores a small number of crossing changes, and then simplifies using the learned heuristic.
A successful output records the diagram and crossing changes; a reduction to the unknot gives a direct witness, while identification of another target knot additionally requires a supported upper bound for that target.

\item \textit{Worked example $4_1\#9_{10}$.}
We exemplify the crossing change pipeline on the composite knot $4_1\#9_{10}$, searching for a witness to its known upper bound of three.

\item \textit{A self-improving upper bound pipeline.}
We also implement a self-improving notebook-based workflow that scans knots whose unknotting number is currently recorded only as a range, inflates their diagrams, performs single crossing changes, simplifies with the unknotter, filters candidate target knots using the Jones and Alexander polynomials, hyperbolic volume and $V_2$ (of Garoufalidis--Kashaev; see \cite{GaroufalidisKashaev-Vn,GaroufalidisLi-V2}), and checks the identifications separately for the certificates, and then writes any improved upper bound directly back into the workbook used for the computation. Whenever the improved upper bound matches the pre-existing lower bound, this upgrades a range entry to an exact unknotting number.
\end{itemize}

\begin{Remark}
The case of $4_1\#9_{10}$ is not meant as a routine connected sum sanity check.
For a long time it was an open problem whether the unknotting number is additive under connected sum, i.e.\ whether $u(K\#J)=u(K)+u(J)$ ($u$ = unknotting numbers) always holds.
The knot $4_1\#9_{10}$ is a particularly striking counterexample: although one expects the connected sum to ``inherit'' the difficulty of its summands, there exists an unexpectedly short unknotting sequence using only three crossing changes.
Crucially, this three move sequence is not apparent on standard low crossing (``easy'') diagrams of $4_1\#9_{10}$: in typical diagrams, local reasoning suggests that at least four crossing changes are needed, and naive search in the low crossing regime does not find the short sequence.
The point of our experiment is that by passing to inflated diagrams, still representing the same knot, and then using the unknotter as a guide, we can investigate this phenomenon algorithmically and search for a diagrammatic witness to the three crossing upper bound.
For background on the additivity problem and the specific $4_1\#9_{10}$ example, see \cite{BrittenhamHermiller-unknotting-nonadditive,BrittenhamHermiller-unknotting-41-51}. Our implementation automates the diagram search and postprocessing.
\end{Remark}

\subsection{How to read this paper}\label{sec:howtoread}

\autoref{sec:background} fixes notation for diagrams and planar diagram codes, and summarizes the diagrammatic moves used throughout.
\autoref{sec:unknotter} defines the RL environment and describes the training setup and evaluation protocol.
\autoref{sec:hard} reports our main simplification benchmarks on the ``hard'' and ``very hard'' unknot families.
\autoref{sec:crossing change} develops the crossing change search pipeline and works out the $4_1\#9_{10}$ example in detail.
\autoref{sec:upper bounds} records a workbook-driven extension of the method whose purpose is to improve unknotting number upper bounds in bulk.

\begin{Remark}
All experiments in this paper are fully scriptable; we provide code, trained models, and the generated datasets and more at \cite{DKT}.
\end{Remark}

\noindent\textbf{Acknowledgments.}
DT subscribes to ``The game of life insists on its own permanence, independent of its players.'' and acknowledges support from ARC Future Fellowship FT230100489.

\section{Background and notation}\label{sec:background}

We assume the reader is comfortable with basic knot theory (diagrams, Reidemeister moves, crossing number), see e.g. \cite{Ad-knots} for background.
We nonetheless fix notation and briefly explain why the computational tasks we consider are intrinsically hard.

\subsection{Diagrams, planar diagram codes, and moves}\label{subsec:pd-0}

A \emph{knot diagram} is the image of a generic immersion $S^1\looparrowright S^2$ with crossing data (over/under information) at each double point. Links and link diagrams are multicomponent versions of knots and knot diagrams, and we will use them interchangeably.
Two diagrams represent the same knot if and only if they are related by a finite sequence of Reidemeister moves. The Reidemeister moves R0 (isotopy, often not specified), R1, R2, R3 are:
\begin{gather*}
\includegraphics[height=2.9cm]{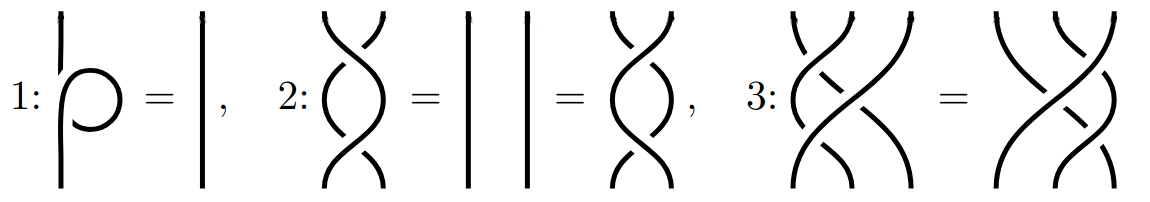}
\end{gather*}
So:
\begin{enumerate}
\item R0 is isotopy, and should be thought of as a cleaning operation.
\item R1 and R2 reduce or increase the number of crossings. We think of these as removing or adding cards to a deck.
\item R3 does not change the number of crossings. We think of this as a shuffle move.
\end{enumerate}
A crucial idea is \emph{increase-shuffle}, which increases the number of crossings using R1 and R2, and then shuffle them in using R3. For example:
\begin{gather*}
\includegraphics[height=5cm]{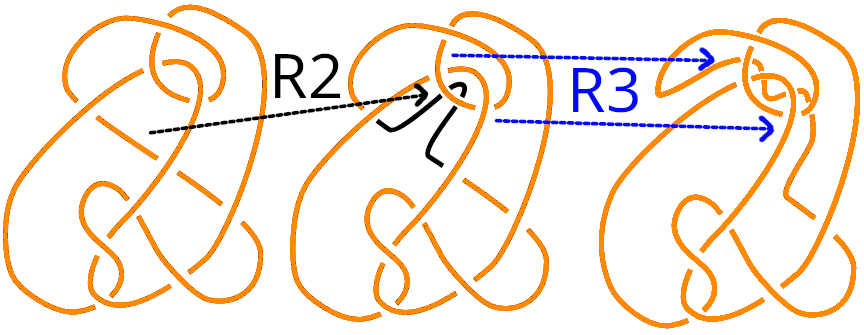}
\end{gather*}
In the picture on the right, the crossings created by the middle R2 move have been shuffled in.

\begin{Remark}
In general, we think of the unknotting as a game where the options are to add or remove cards (via R1 and R2), or to shuffle the deck (via R3).
\begin{gather*}
\includegraphics[height=5cm]{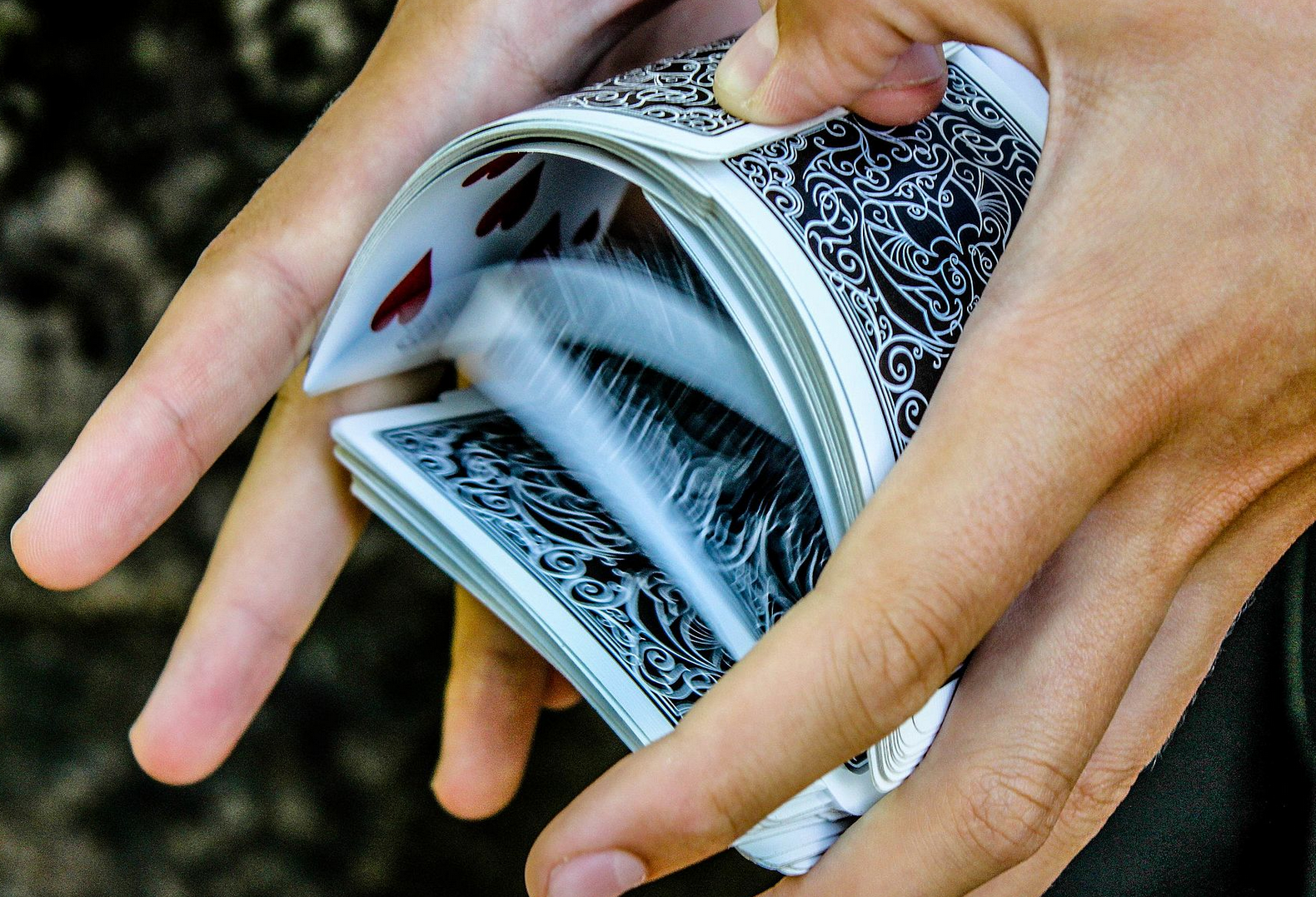}
\end{gather*}
This is an important (but certainly not new) idea to keep in mind.
\end{Remark}

In the implementation we represent a diagram by a \emph{planar diagram (PD) code}. Concretely, a PD code is a list of quadruples
\[
\texttt{PD} = \bigl[ [a_1,b_1,c_1,d_1],\dots,[a_n,b_n,c_n,d_n]\bigr],
\]
encoding the cyclic order of the four half-edges incident to each crossing, together with the over/under structure. In $[a,b,c,d]$, the opposite pair $a,c$ is the underpassing strand and $b,d$ is the overpassing strand, with entries listed counterclockwise. For example:
\begin{gather*}
\includegraphics[height=6cm]{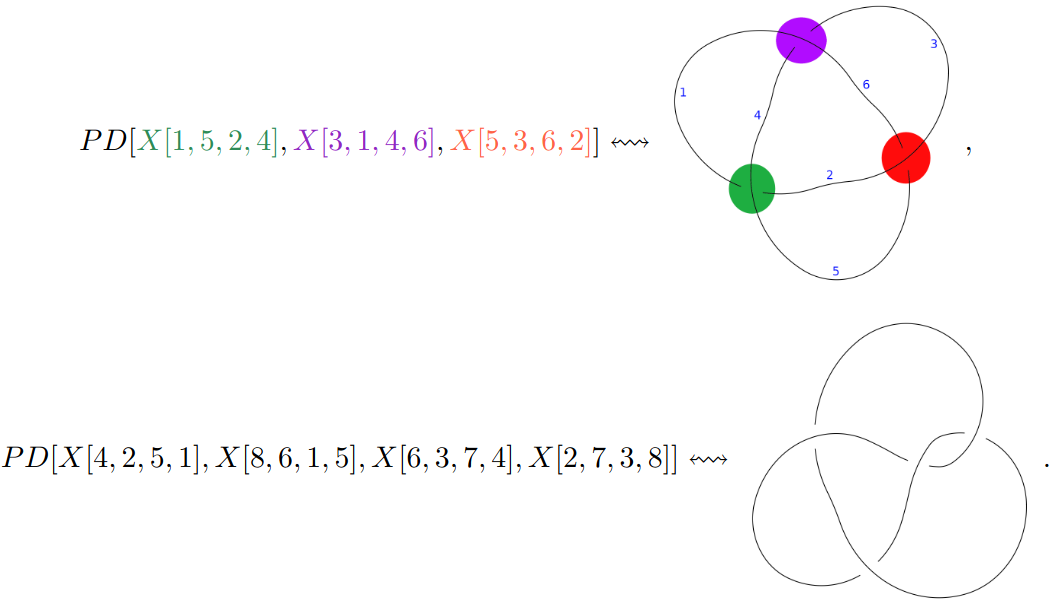}
\end{gather*}
(Sometimes PD codes are written $PD[X[a,b,c,d],\dots]$, but we usually omit the $PD$ and the $X$.)

\begin{Remark}
We work with diagrammatic encodings compatible with \texttt{spherogram} (as shipped with \texttt{SnapPy} \cite{CullerDunfieldWeeks-SnapPy}).
Throughout, a \emph{diagram} means a \emph{planar diagram} (PD) encoding of an oriented link diagram. We also use \emph{Dowker--Thistlethwaite (DT) codes} (in some of the extra material, not in this file) but they are less important for our purpose, so we will not recall them. We routinely convert between PD codes and other standard encodings whenever available.
\end{Remark}

Our move set consists of diagrammatic simplifications (Reidemeister 1/2/3 and standard ``cleanup'' and isotopy routines) together with a small amount of randomization (shuffling local R3 choices).
This is intentionally conservative: we want a move set that is uncontroversial for knot theorists and easy to verify independently.

\subsection{Why simplification is hard}\label{subsec:hardness}

Even for the unknot, simplification can be deceptive: many diagrams admit no immediate simplifying Reidemeister 1/2 moves, and a sequence of Reidemeister 3 moves may be required before any crossing reduction becomes available. For example:
\begin{gather*}
\includegraphics[height=6cm]{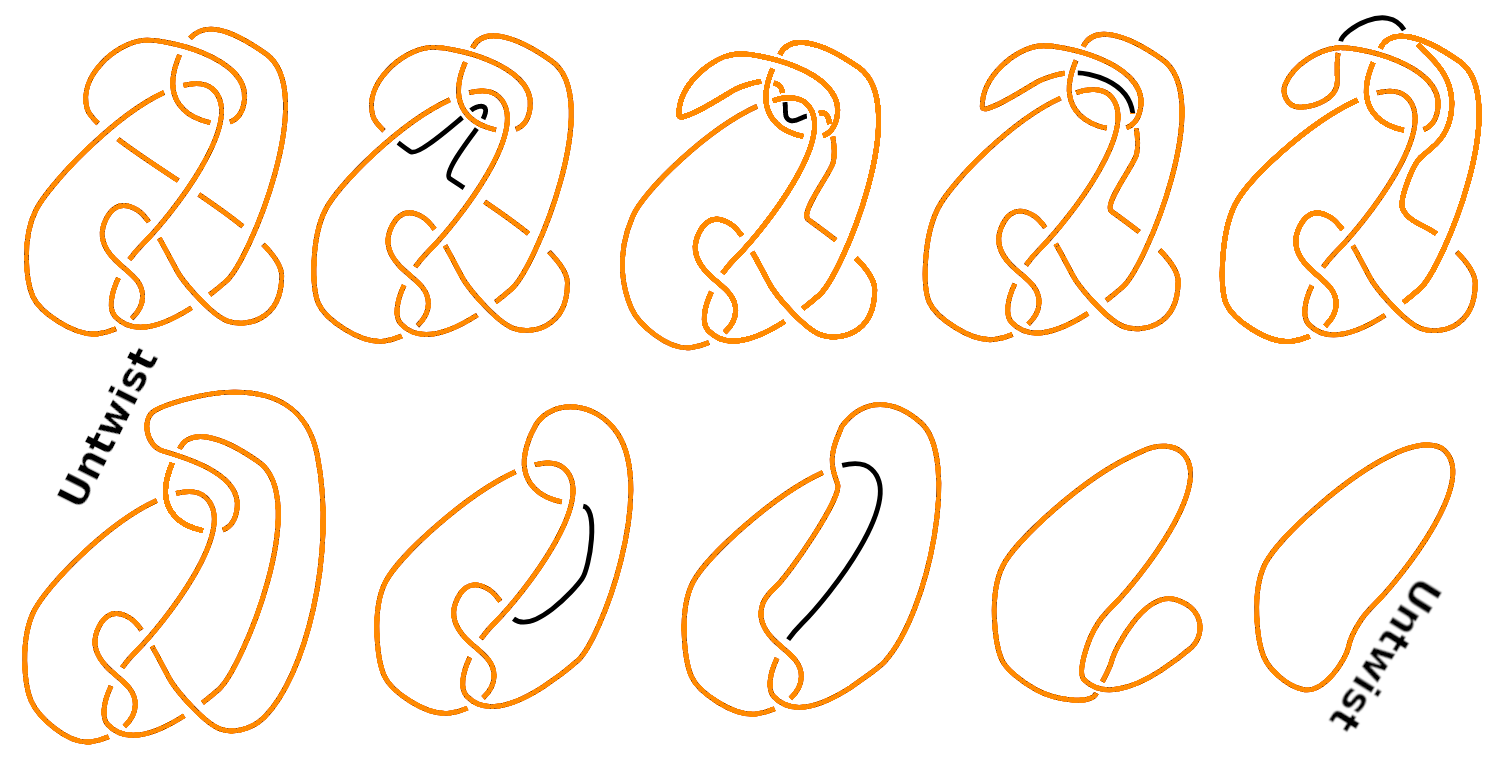}
\end{gather*}
(This is a famous ``local minimum'' example; it is taken from \cite{MR3193721} and then modified.)
Some unknot diagrams require an increase in crossings before any reduction becomes possible; see \autoref{subsec:hard-defs}. This is stronger than merely requiring a sequence of R3 moves.

From a computational viewpoint, this creates two intertwined difficulties:
\begin{itemize}
\item \textit{Sparse progress:} if we measure progress by the crossing number of the current diagram, then many ``correct'' steps (notably R3 moves) do not change this measure at all.
\item \textit{Local minima and ``dead ends'':} greedy strategies that only accept crossing reducing moves can (and will!) get stuck, while strategies that occasionally allow temporary increases in crossing number face an enormous branching factor.
\end{itemize}
This motivates the following guiding principle: we should allow the algorithm to explore, but we must also equip it with a mechanism to undo unproductive exploratory detours.
In our setup, \texttt{Link.backtrack} randomly applies crossing-increasing Reidemeister moves to escape a local trap. 
	
\begin{Remark}
The name is inherited from \texttt{spherogram}; this operation does not reverse the recorded move history.
\end{Remark}

Backtracking is not a new topological move; it is a search control device addressing the combinatorial explosion inherent in the Reidemeister search.

\subsection{A minimal RL primer}\label{subsec:rlprimer}

Reinforcement learning (RL) is a framework for learning \emph{policies} (decision rules) for sequential decision making problems, see e.g. \cite{SuttonBarto-RL} for background. (We assume the reader has background in RL and machine learning techniques, but not too much is actually needed to follow the exposition.)
An RL problem is typically formalized as a Markov decision process (MDP), consisting of:
\begin{enumerate}
\item a set of \emph{states} $s$,
\item a set of \emph{actions} $a$ available in each state,
\item a transition rule $s\mapsto s'$ when an action is applied, and
\item a \emph{reward} signal $r(s,a,s')$.
\end{enumerate}
The agent repeatedly observes a state (here: a diagram), chooses an action (here: a move or a macro step simplification mode), and receives reward.
The goal is to learn a policy maximizing expected cumulative reward (often discounted over time).
In our context, rewards are designed to correlate with ``getting simpler'': e.g. reducing the crossing number, reaching the unknot, or reaching a diagram that admits further reductions. For example (but simplified):
\begin{gather*}
\raisebox{-2.5cm}{\includegraphics[height=5cm]{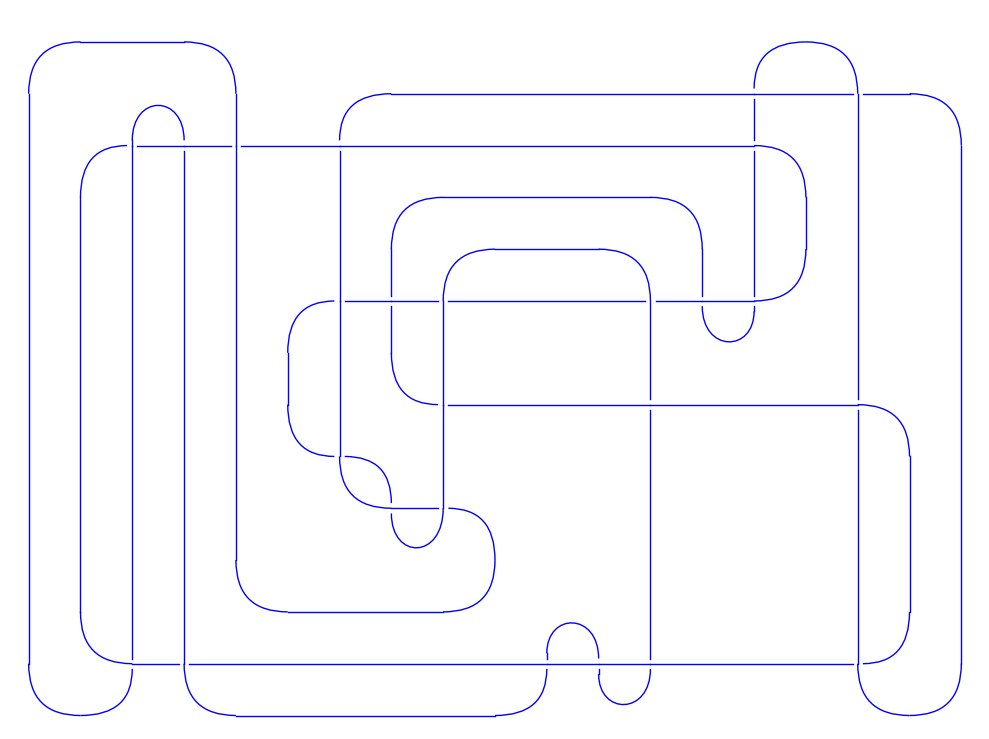}}\to
\raisebox{-2.5cm}{\includegraphics[height=5cm]{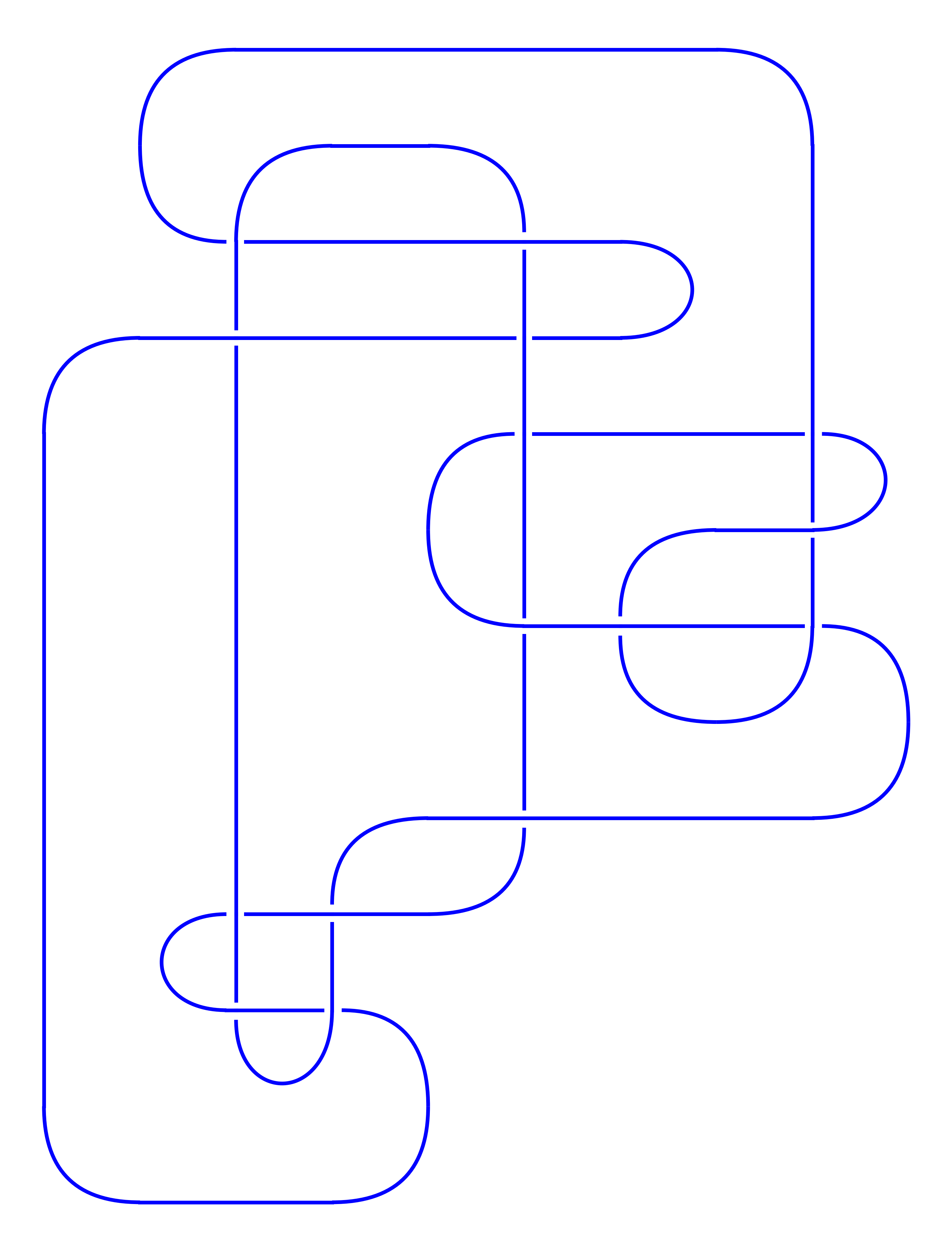}}\to\text{reward.}
\end{gather*}
For our purposes, the key point is conceptual rather than philosophical:
RL provides a mechanism to learn heuristics in large, irregular search spaces where hand-crafted greedy rules fail, especially when progress signals are sparse and the correct strategy sometimes requires non-monotone exploration (hence the importance of backtracking).
We will not attempt a general survey of RL; we only describe the specific Markov decision processes (MDP) and training setup used in this paper in \autoref{sec:unknotter}.

\subsection{Benchmarks used in this paper}\label{subsec:benchmarks}

Given a starting diagram $D_0$, we record the sampled trajectory under the above operations within a fixed step budget $T$.
We then define the \emph{empirical minimum crossing number}
\[
c_{\min}(D_0;T) \;:=\; \min_{0\leq t\leq\tau} c(D_t),\qquad \tau\leq T,
\]
This quantity depends on the allowed moves and on the budget $T$; it is not a certified invariant of the underlying link type. Here $D_0,\dots,D_\tau$ is the sampled trajectory, ending at termination or the step limit. Thus $c_{\min}$ also depends on the policy and random choices; no minimum over all reachable diagrams is computed.
In particular, $c_{\min}(D_0;T)$ is an upper bound on the true crossing number of the underlying link, and it is $0$ precisely when the simplification trajectory reaches a crossing-free diagram.

Our method, with details given below, is:
\begin{itemize}
\item Train an RL agent using macro actions built from R1, R2 and R3, with rewards for reducing the number of crossings. Planar isotopy R0 is implicit in the PD representation, rather than a separate policy action,
\item The agent outputs $c_{\min}(D_0;T)$.
\end{itemize}

\begin{Remark}
The code can also output additional data like the steps used or the minimal diagram, but that is not relevant for the abstract discussion below.
\end{Remark}

We will apply our method to three families of instances:
\begin{itemize}
\item the ``hard'' and ``very hard'' unknot diagrams introduced in \cite{ApplebaumBlackwellDaviesEdlichJuhaszLackenbyTomasevZheng-RLunknot}, and
\item the diagram families and crossing change experiments motivated by \cite{BrittenhamHermiller-unknotting-nonadditive,BrittenhamHermiller-unknotting-41-51}, and
\item prime knots after crossing flip.
\end{itemize}
All sources provide diagrammatic instances that are difficult for naive greedy simplifiers because many local moves temporarily increase $c(D)$ or require nonlocal rearrangements before a crossing reduction becomes available.

\section{The reinforcement learning unknotter}\label{sec:unknotter}

We now describe the RL agent used throughout the experiments.
The agent is trained on a \texttt{gymnasium} environment whose states are diagrams and whose actions are \emph{macro-actions} that call \texttt{spherogram}'s simplification routines.
We train using PPO as implemented in \texttt{stable-baselines3}.

\begin{Remark}
We call the model below the \emph{unknotter}.
The unknotter is available here: \cite{DKT}.
\end{Remark}

\subsection{Planar-diagram encodings}\label{subsec:pd}

Given a diagram $D$, we write $c(D)$ for its number of crossings.
Our code accepts the following PD/DT input formats (and we use these formats in all datasets released with this paper):
\begin{itemize}
\item \textbf{DT code:} a string of the form \texttt{DT: [\dots]} which is passed directly to \texttt{spherogram.Link}.
\item \textbf{PD list-of-quads:} a string representing a list of quadruples
\[
[[a_1,b_1,c_1,d_1],\ldots,[a_n,b_n,c_n,d_n]],
\]
parsed as JSON (or, as a fallback, as a Python literal) and then passed to \texttt{spherogram.Link}.
\item \textbf{PD in \texttt{X[\dots]} blocks:} a string containing one or more blocks \texttt{X[a,b,c,d]}; these blocks are extracted and converted into a list-of-quads.
\item \textbf{Simple JSON wrappers:} if a line is valid JSON containing a field \texttt{pd}/\texttt{dt} (or variants), we unwrap and parse that field.
\end{itemize}
In all cases we apply a strict parser (rejecting lines that are not clearly PD/DT encodings) and keep only lines that successfully instantiate a \texttt{Link} object.

\subsection{Environment: state, actions, and termination}\label{subsec:env}

To summarize, the environment state includes a \texttt{spherogram.Link} object $D$ obtained as in \autoref{subsec:pd}, together with the step counter, blocked modes and previous action information.
Episodes start by sampling a diagram uniformly at random from a fixed list of input PD strings (a mixture of hard/very-hard instances and random diagrams; see \autoref{subsec:data}).

Rather than just feeding the full PD graph to the network, the current implementation uses a compact feature vector
$o_t\in \mathbb{R}^6$
with the following entries:
\begin{enumerate}
\item $c(D)$, the current number of crossings,
\item the number of link components (when available from \texttt{spherogram}),
\item the current step counter,
\item a binary feature indicating whether a single \texttt{basic} simplify call would strictly reduce crossings,
\item a binary feature indicating whether the previous actions reduced crossings,
\item a constant bias feature.
\end{enumerate}

The action space (macro-actions) is as follows.
An action is a pair $(m,\kappa)$ in a \texttt{MultiDiscrete} space, where
\[
m\in\{0,1,2,3\},\qquad \kappa\in\{0,1,\dots,\kappa_{\max}\},\;\;\kappa_{\max}=8.
\]
(In other words, the model can choose its action, details up next, and how often it wants to apply them.)
The mode $m$ selects one of four macro-actions (details on the moves follow in \autoref{subsec:moves}):
\begin{itemize}
\item $m=0$: \texttt{simplify(mode="basic")} (this tries to reduce the number of crossings using all R1 and R2 instances it can find);
\item $m=1$: \texttt{simplify(mode="level", type\_III\_limit=$\kappa\vee 1$)} (a R3 shuffle);
\item $m=2$: \texttt{simplify(mode="pickup", type\_III\_limit=$\kappa\vee 1$)} (a slightly different type of R3 shuffle);
\item $m=3$: \texttt{backtrack(steps=$\kappa\vee 1$, prob\_type\_1=0.35, prob\_type\_2=0.65)} (this increases the number of crossings by adding R1 and R2 moves randomly) followed by a small type~3 shuffle.
\end{itemize}
(Here $\kappa\vee1:=\max\{\kappa,1\}$.)
The backtrack macro action can be disabled; in this paper we keep it enabled.
An episode terminates when either
(i) $c(D)=0$ (a crossing-free diagram, representing the unknot when the input has one component), or
(ii) a fixed step budget $T=500$ is reached.

\begin{Remark}
The values $\kappa_{\max}=8$ and $T=500$ are the ones that worked well. They can be changed if the user wants to.
\end{Remark}

\subsection{Move set implemented in \texttt{spherogram}}\label{subsec:moves}

The core simplification primitive is \texttt{spherogram}'s built-in diagram simplifier
\[
D \mapsto \texttt{simplify}(D;\,\texttt{mode}=\text{basic/level/pickup}),
\]
which performs combinations of Reidemeister moves (and related local simplifications) according to the chosen mode. These routines apply available reductions heuristically; the resulting diagram need not be unique and can depend on move order and random choices.
In addition, we use \texttt{Link.backtrack} as a stochastic ``escape'' operator that attempts to undo local traps by exploring alternative simplification paths.

\subsection{Reward shaping and ``blocking''}\label{subsec:reward}
Let $c_{\text{before}}$ and $c_{\text{after}}$ be the crossing numbers before and after applying the chosen macro-action, and set
\[
\Delta := c_{\text{before}}-c_{\text{after}}.
\]
The per-step reward is a dense shaping term plus a small step cost:
\[
r \;=\; w_{\Delta}\,\Delta \;-\; w_{\uparrow}\,\max\{0,-\Delta\}\;-\; w_{\mathrm{pot}}\,c_{\text{after}}\;-\;\lambda,
\]
with parameters
\[
(w_{\Delta},w_{\uparrow},w_{\mathrm{pot}},\lambda)=(1.0,\,0.5,\,0.02,\,0.05),
\]
and an additional terminal bonus of $+10$ upon reaching $c(D)=0$.

To reduce wasted exploration on obviously unproductive modes, we implement a simple \emph{blocking} heuristic:
if a non-backtrack mode $m\in\{0,1,2\}$ increases crossings (i.e.\ $\Delta<0$), we temporarily mark that mode as blocked, and future requests for that mode are cyclically remapped to the next unblocked mode.
Any successful crossing reduction ($\Delta>0$) resets the blocked set; additionally, using backtrack also resets the blocked set.

\begin{Remark}
Implementing the blocking feature was crucial. Without it, the unknotter failed to do its designed task properly.
\end{Remark}

\subsection{Policy and training}\label{subsec:training}

We train a Proximal Policy Optimization (PPO) agent with an MLP policy network (\texttt{MlpPolicy} in \texttt{stable-baselines3}).
Unless stated otherwise, the hyperparameters are as follows:
\begin{itemize}
\item learning rate: linear decay $3\cdot 10^{-4}\cdot p$ (with training progress $p\in[0,1]$),
\item rollout length $n_{\mathrm{steps}}=2048$, batch size $256$, epochs per update $10$,
\item discount $\gamma=0.995$, GAE parameter $\lambda_{\mathrm{GAE}}=0.97$,
\item clip range $0.2$, entropy coefficient $0.01$, value-function coefficient $0.5$,
\item max gradient norm $0.5$.
\end{itemize}
Models are evaluated periodically during training using a one-episode evaluation callback, and we keep the best-performing checkpoint.

\begin{Remark}
These parameters are not set in stone; tweaking might give better results.
\end{Remark}

\subsection{Training data}\label{subsec:data}

Training episodes sample PD/DT strings from a mixture of sources.
In the current codebase we use:
\begin{itemize}
\item a large file of ``hard'' unknots and a small file of ``very hard'' unknots from \cite{ApplebaumBlackwellDaviesEdlichJuhaszLackenbyTomasevZheng-RLunknot}, stored as CSVs (first column = PD/DT string), and
\item auxiliary sets of random diagrams used for exploration and regularization.
\end{itemize}
All lines are filtered by the strict parser from \autoref{subsec:pd}, ensuring that every training instance can be instantiated as a \texttt{spherogram.Link}.

\section{Unknotting the hard and ``very hard'' unknots}\label{sec:hard}

This section tests the unknotter on the hard unknot datasets introduced in \cite{ApplebaumBlackwellDaviesEdlichJuhaszLackenbyTomasevZheng-RLunknot}.
We first recall the terminology used there, then describe our evaluation protocol, and finally report the results.

\subsection{``Hard'' and ``very hard''}\label{subsec:hard-defs}

Following \cite{ApplebaumBlackwellDaviesEdlichJuhaszLackenbyTomasevZheng-RLunknot}, a diagram $D$ of the unknot is called \emph{hard} if, in any sequence of Reidemeister moves from $D$ to the trivial diagram, the crossing number must first increase before it decreases.
In particular, there is no monotone simplification of $D$ to the trivial diagram with respect to $c(\placeholder)$.
Hard unknot diagrams are of particular interest because they obstruct naive greedy simplification heuristics and may serve as stress tests for unknot detection procedures.

\begin{Remark}
Our task is diagram simplification. Difficulty of simplification does not imply difficulty of unknot recognition by other methods.
\end{Remark}

In addition, \cite{ApplebaumBlackwellDaviesEdlichJuhaszLackenbyTomasevZheng-RLunknot} identifies a much smaller subset of ``really hard'' diagrams: these are hard unknot diagrams on which \texttt{SnapPy}'s built-in heuristic simplification (including its more sophisticated \texttt{global} heuristic) still fails to reach the trivial diagram even after repeated attempts; see \cite{ApplebaumBlackwellDaviesEdlichJuhaszLackenbyTomasevZheng-RLunknot}.
In this paper we will refer to these ``really hard'' instances as \emph{very hard} unknots. An example is:
\begin{gather*}
\includegraphics[height=6cm]{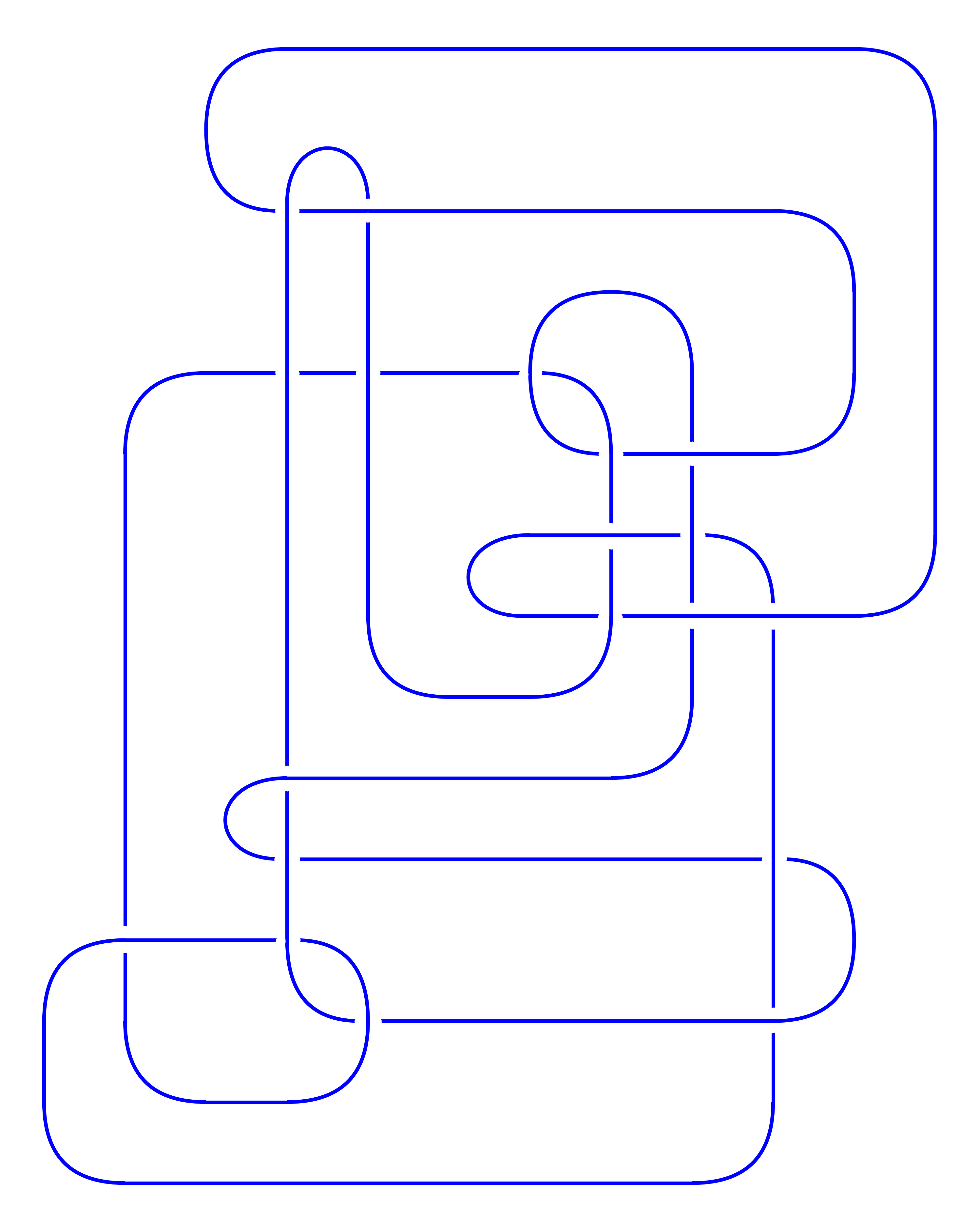}
\end{gather*}

\subsection{Experimental setup}\label{subsec:hard-protocol}

We work with the PD/DT formats described in \autoref{subsec:pd}.
Concretely, we load the hard/very-hard instance lists from the datasets distributed alongside \cite{ApplebaumBlackwellDaviesEdlichJuhaszLackenbyTomasevZheng-RLunknot} (their public bucket).
In all experiments we apply our strict parser and keep only strings that instantiate a \texttt{spherogram.Link} object (see \autoref{subsec:pd}).

We evaluate the trained unknotter in deterministic mode (no action sampling) and run it for a fixed step budget $T$ (in our code: \texttt{maxstepsdone}, typically $T=500$).
Each step is one macro-action of the form \texttt{simplify(mode, cap)} (with \texttt{mode} in \{\texttt{basic}, \texttt{level}, \texttt{pickup}, \texttt{backtrack}\}, cf.\ \autoref{subsec:moves}), plus the small post-backtrack shuffle by pure type~3 moves (typically $k\le 2$) to avoid immediate cycling.

For a start diagram $D_0$, let $c_{\min}(D_0;T)$ be the empirical minimum crossing number encountered along the trajectory.
We record:
\begin{itemize}
\item \textit{Unknot success:} whether the trajectory reaches the trivial diagram, i.e.\ $c_{\min}(D_0;T)=0$.
\item \textit{Crossing reduction:} whether the agent achieves any reduction, i.e.\ $c_{\min}(D_0;T)<c(D_0)$.
\end{itemize}
The latter is a weaker diagnostic that is useful for sanity checks and ablations; the main target is actual unknotting.

Although the policy evaluation is deterministic, the environment contains stochasticity through: (i) \texttt{backtrack}, which is stochastic in \texttt{spherogram}, and (ii) our optional post-backtrack random type~3 shuffle.
Hence we repeat each evaluation $R$ times per instance with independent random seeds.

\subsection{Results on very hard unknots}\label{subsec:hard-results}

We focus on the ``very hard'' subset from \cite{ApplebaumBlackwellDaviesEdlichJuhaszLackenbyTomasevZheng-RLunknot}.
Empirically, these are precisely the instances where naive simplifiers and even repeated SnapPy \texttt{global} calls often stall, making them a robust stress test for our approach.

\begin{Proposition}
(Empirical.) On the $N=385$ very hard diagrams from \cite{ApplebaumBlackwellDaviesEdlichJuhaszLackenbyTomasevZheng-RLunknot}, evaluated with a fixed budget of $T=500$ macro-steps and repeated $R=10$ times per instance, the unknotter achieves a mean per-run unknotting rate of $p=94.57\%$ with standard deviation $1.15\%$ across runs. Equivalently, in a typical run it unknots about $364$ of the $385$ instances.

A stronger way to summarize robustness is instance-wise: in these experiments, every instance was unknotted in \emph{at least one} of the $R=10$ runs (so there were no ``never resolved'' diagrams at this budget), $266$ instances were unknotted in \emph{all} $10$ runs, and the remaining $119$ instances were unknotted only \emph{intermittently} (solved in some but not all runs), indicating residual sensitivity to stochastic choices in \texttt{backtrack}/shuffle.
\end{Proposition}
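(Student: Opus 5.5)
This statement is empirical, so its "proof" is a reproducible computational protocol together with certification of each claimed success; I would organize it that way. First I fix the input. I load the very hard list from the public dataset of \cite{ApplebaumBlackwellDaviesEdlichJuhaszLackenbyTomasevZheng-RLunknot} and pass it through the strict parser of \autoref{subsec:pd}. I then check that exactly $N=385$ strings instantiate a one-component \texttt{spherogram.Link}; this matters because termination at $c(D)=0$ certifies the unknot only for knots. Next I freeze the trained unknotter checkpoint from \autoref{subsec:training}, set deterministic policy evaluation, and fix $T=500$ and the environment parameters of \autoref{subsec:env} and \autoref{subsec:reward}. Finally I fix $R=10$ independent seeds $s_1,\dots,s_{10}$, which control the only sources of randomness: \texttt{backtrack} and the post-backtrack type~3 shuffle.

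For each pair (instance $i$, seed $s_r$) I run one episode and record the indicator $X_{i,r}=\mathbbm{1}[c_{\min}(D_0^{(i)};T)=0]$. I also log the full macro-action trajectory. With the trajectory, each success becomes an independently checkable certificate. Every macro-step consists of genuine Reidemeister moves, as noted in \autoref{subsec:moves}. Replaying the logged sequence from $D_0^{(i)}$ should therefore reach the crossing-free diagram, and a success should be accepted only after this replay check passes. Concretely, I would store the intermediate PD codes and verify that consecutive codes differ by R1/R2/R3 moves or by a recorded simplify call that can be re-executed deterministically under the same seed.

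The numerical claims then follow by aggregation:
\[
p_r=\tfrac1{385}\sum_i X_{i,r},\qquad p=\tfrac1{10}\sum_r p_r,
\]
with the standard deviation of $p_1,\dots,p_{10}$ reported, and $385p\approx 364$. For the instance-wise claims I compute $S_i=\sum_r X_{i,r}$ and check three things: no instance has $S_i=0$, exactly $266$ instances have $S_i=10$, and the remaining $119$ have $1\le S_i\le 9$ (note $266+119=385$).

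The main obstacle is reproducibility rather than mathematics. \texttt{spherogram}'s \texttt{backtrack} and \texttt{simplify} draw on global random state and may depend on library versions. I would therefore pin the \texttt{SnapPy}/\texttt{spherogram}, \texttt{stable-baselines3} and \texttt{torch} versions, seed Python's \texttt{random}, \texttt{numpy} and \texttt{torch} per run, and rerun the whole grid to confirm that the $X_{i,r}$ are bit-identical. Without this, the per-run statistics, and especially the 266/119 split, are only distributional statements that could shift slightly between reruns. The "at least one success per instance" claim is robust regardless, since each success is a verified unknotting certificate.
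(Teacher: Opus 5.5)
Your proposal is correct and follows the paper's approach. The paper's proof only presents the per-run table (number and percentage of instances resolved in each of the $10$ runs, with median and worst minimal crossing numbers) and the instance-wise table (how often each diagram was resolved), which are your aggregates $p_r$, $p$, their standard deviation and $S_i$; your replay certification of each success and your version and seed pinning for bit-identical reruns are extra safeguards the paper does not carry out, since it only remarks that successful runs can output a verifiable move sequence.
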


\begin{proof}
Here is the summary table:
\begin{gather*}
\includegraphics[height=6cm]{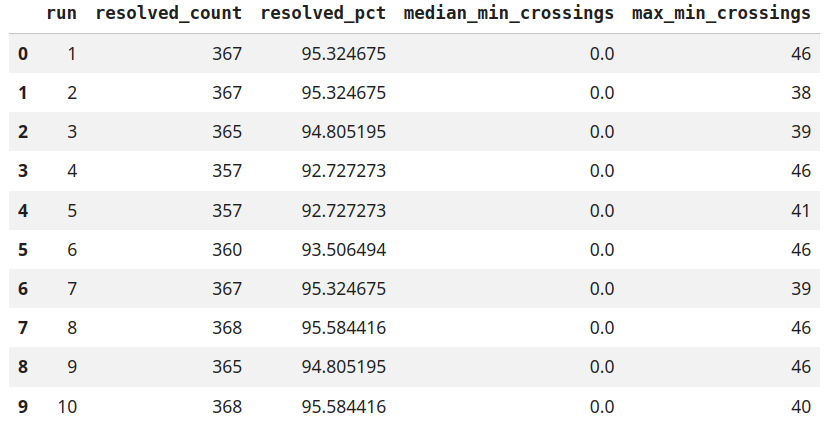}\\\includegraphics[height=6cm]{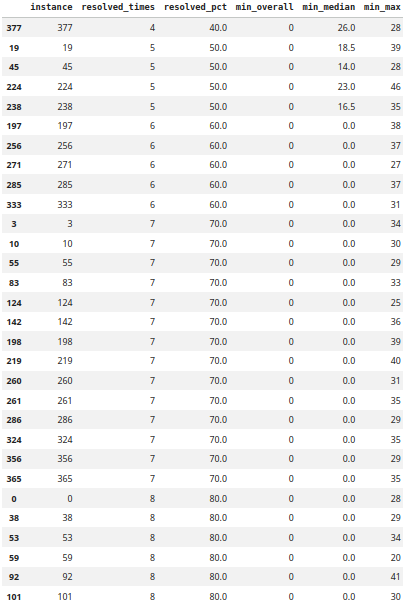}
\end{gather*}
The figures above summarize our evaluation on the $N=385$ ``very hard'' unknots from \cite{ApplebaumBlackwellDaviesEdlichJuhaszLackenbyTomasevZheng-RLunknot} with a fixed budget of $T=500$ macro-steps, repeated for $R=10$ independent runs.
The first figure reports per-run aggregates: for each run it lists the number and percentage of instances that were fully resolved (i.e. reduced to a crossing-free diagram), together with the median of the minimum crossing number attained over instances in that run (which is $0$ throughout) and the worst minimum achieved among the remaining unresolved instances (typically in the range $38$ to $46$).
The second figure is instance-wise: it ranks diagrams by how often they were resolved across the $10$ runs and records, for each instance, the min/median/max of the best crossing number achieved across runs.
In particular, all instances are resolved at least once, but a noticeable subset is only intermittently resolved (e.g. $4/10$, $5/10$, \dots), indicating sensitivity to stochastic choices in the simplification/backtracking dynamics at fixed budget.
\end{proof}

We point out that, although the unknotter operates probabilistically, each successful run can output a verifiable sequence of moves for the simplification process.

\subsection{Why backtracking matters}\label{subsec:hard-ablation}

One conceptual reason for introducing the \texttt{backtrack} action is that it directly targets the ``local trap'' phenomenon discussed in \autoref{subsec:hardness}:
the agent may spend many steps in a region where $c(D)$ does not decrease, and allowing temporary crossing increases can expose new simplification paths.
Backtracking provides a controlled way to abandon a nonproductive branch and try an alternative simplification path.

\section{Crossing change search for composite knots}\label{sec:crossing change}

In this section we use the unknotter as a search heuristic for a different task: we aim to find \emph{diagrams} of a fixed knot $K$ that become unknotted after a small number of crossing changes.
The guiding example is the composite knot $4_1\# 9_{10}$, and we use the same procedure for further examples appearing in \cite{BrittenhamHermiller-unknotting-nonadditive,BrittenhamHermiller-unknotting-41-51}.

\subsection{Preliminaries}\label{subsec:cc-prelims}

A \emph{crossing change} (or \emph{flip}) is the local operation that flips the over/under information at a chosen crossing of a diagram:
\begin{gather*}
\includegraphics[height=6cm]{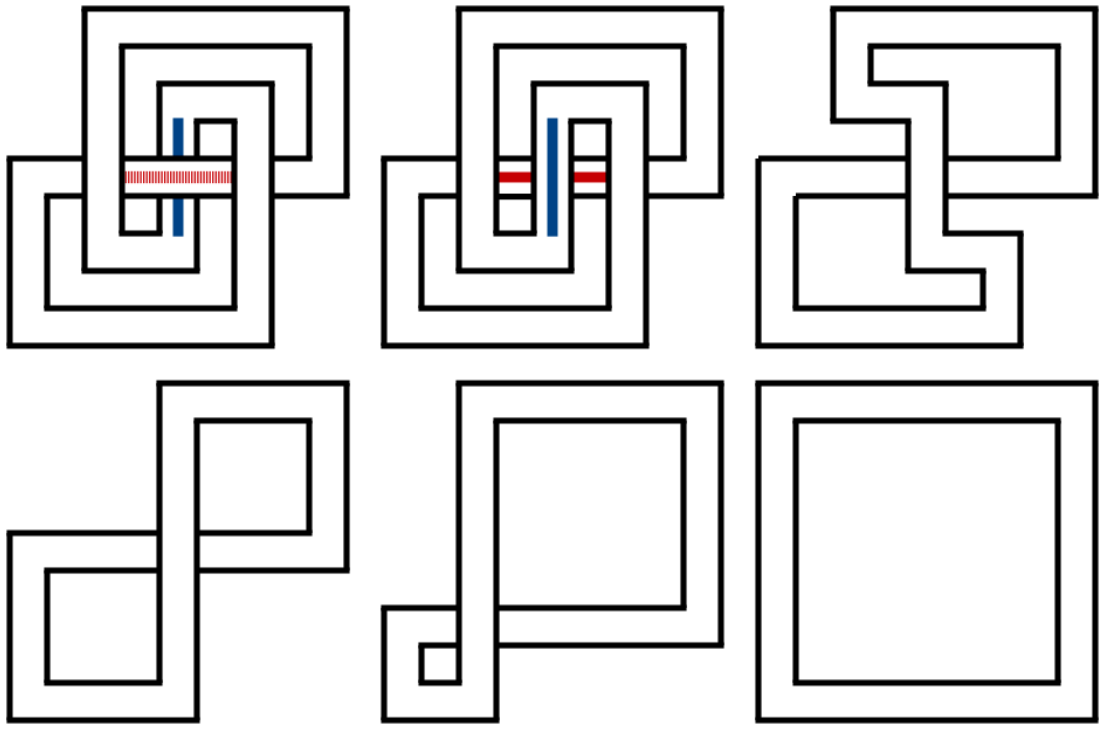}
\end{gather*}
(This example is taken from \url{https://en.wikipedia.org/wiki/Unknotting_number}.)
A classical invariant is the \emph{unknotting number} $u(K)$, the minimum number of crossing changes required to turn some (!) diagram of $K$ into a diagram of the unknot.
Our experiments are not designed to certify $u(K)$; rather, we look for explicit \emph{diagram-level witnesses} that $K$ can be unknotted by $m$ crossing changes, often starting from diagrams that are far from minimal.

A natural question is how $u(\,\cdot\,)$ behaves under connected sum $\#$.
It was long conjectured that unknotting number should be \emph{additive},
\[
u(K\# L)\stackrel{?}{=}u(K)+u(L),
\]
in analogy with the additivity of several classical invariants under connected sum.
This conjecture is now known to be false, which was a major breakthrough \cite{BrittenhamHermiller-unknotting-nonadditive,BrittenhamHermiller-unknotting-41-51}.

One reason why this was open for a long time is twofold.
First, the definition of $u(K)$ is inherently diagram-dependent: it asks for the existence of a good diagram and a good set of crossings to flip, and there is no a priori reason that a standard diagram for $K\#L$ (obtained by concatenating standard diagrams for $K$ and $L$) should expose such a set. As an example, and with reference to \cite{WangZhang-remark-counterexample-unknotting}:
\begin{gather*}
\includegraphics[height=6cm]{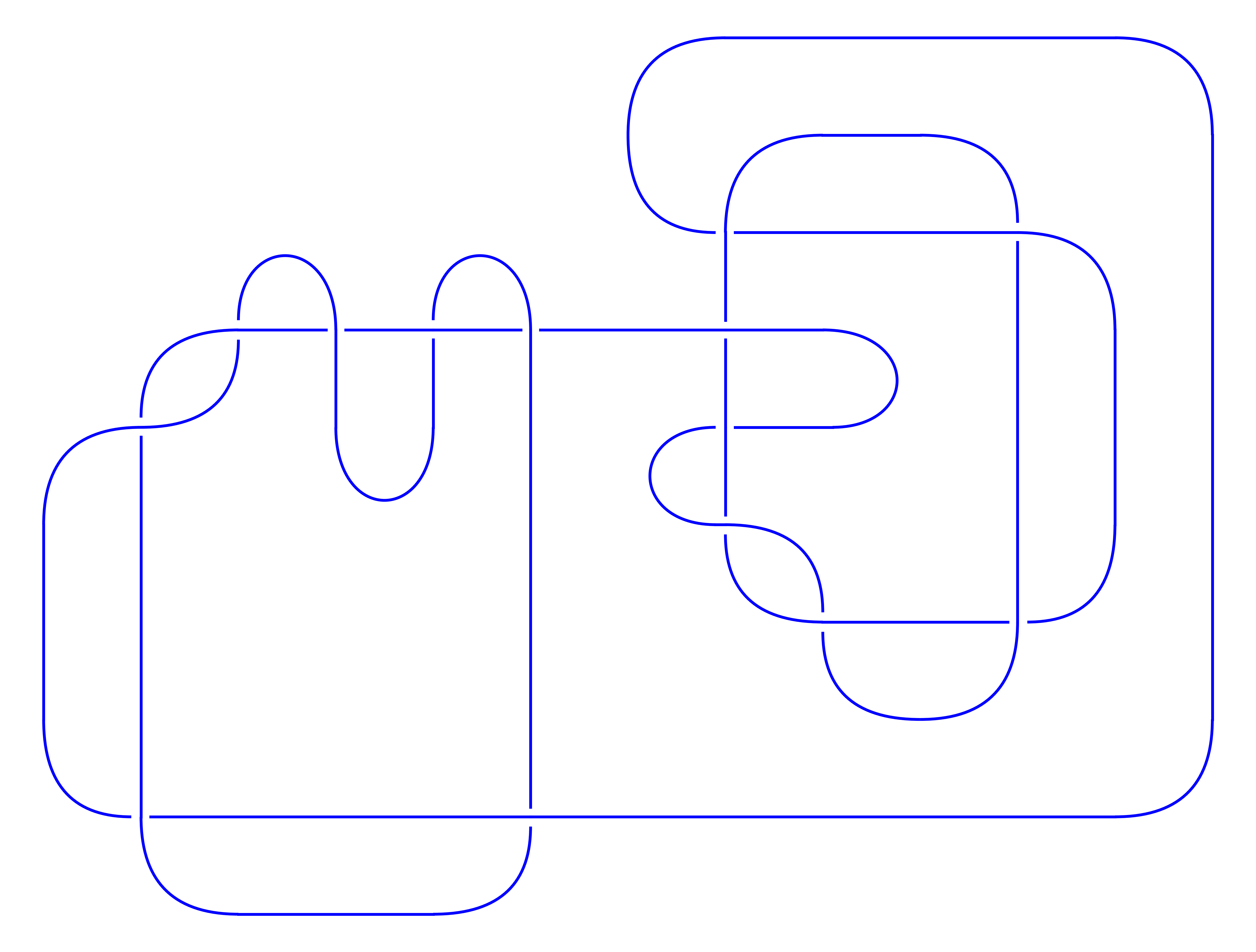}
\includegraphics[height=6cm]{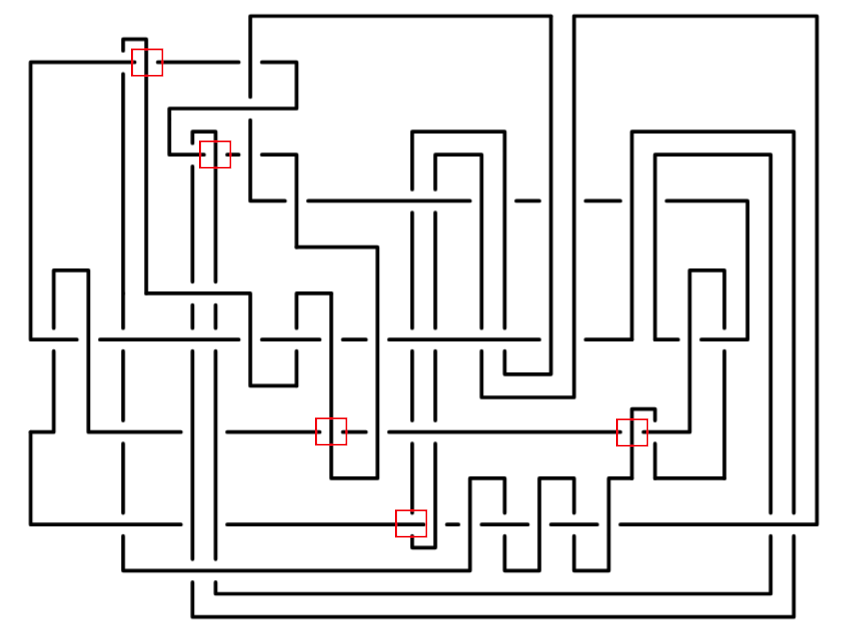}
\end{gather*}
The left diagram is a standard minimal crossing diagram of $7_1\# \text{mir}(7_1)$, and the right is also a diagram of $7_1\# \text{mir}(7_1)$, but with way more crossings. On the left diagram six crossing changes are needed, while on the right five suffice (marked).

\begin{Remark}\label{R:Stuck}
Note the surprising flip on the right diagram that turns a R2 situation into a, at the first glance, ``stuck'' position.
\end{Remark}

Second, as far as we know, the standard arsenal of computable invariants (Jones polynomial etc.) offers limited guidance in locating promising crossings: many invariants yield lower bounds on $u(K)$, but do not pinpoint where to change crossings, and they often behave too coarsely under connected sum to detect subtle diagram level phenomena.

This is precisely why we treat the crossing change search as an explicit diagrammatic exploration problem rather than an invariant-driven one.

\subsection{Pipeline}\label{subsec:cc-pipe}

Starting from a fixed base diagram $D_0$ of $K$ (for instance the standard PD from a table), we generate more complicated diagrams of the same knot type by inserting random diagrammatic complexity via Reidemeister moves and related local modifications that preserve the knot type.
We refer to this as \emph{inflation}.
Operationally, inflation is implemented as a bounded-length random walk in our move set (\autoref{subsec:moves}), producing a diagram $D$ with larger crossing number but still representing $K$.

\begin{enumerate}[label=(\Alph*)]

\item \textit{Crossing change search (diagram-level witnesses).}
Given a diagram $D$, after applying a prescribed number $m$ of crossing changes at selected crossings, we run the unknotter as a simplifier for a fixed budget $T$ and declare success if it reaches a crossing-free diagram.
Thus, in this part of the workflow the unknotter is used as an oracle for the outcome of a candidate crossing change pattern:
\[
(D,\;S)\quad \longmapsto \quad \text{``unknotted?''},
\]
where $S$ is a set (or ordered list) of crossings to flip.
As before, this is an empirical oracle: success means ``unknotted within budget $T$ under our move set'', and failure means only ``not found within this budget''.
A success produces an explicit \emph{diagram-level witness} (inflated base PD, the flipped indices, and the simplification trajectory / best diagram encountered).

\item \textit{One-flip identification (name the neighbor).}
In addition, we also run a second, complementary postprocessing pipeline in which we \emph{fix $m=1$} and aim to identify the knot type obtained after a single crossing change.
Concretely, for each inflated diagram $D$ and each crossing index $i$, we form the singly-flipped diagram $D^{\{i\}}$, run the unknotter for budget $T$, and record a simplified representative $\widetilde D^{\{i\}}$ (in practice: the lowest-crossing PD encountered along the run).
We then compute the Jones polynomial of $\widetilde D^{\{i\}}$ and match it against a database e.g. \texttt{KnotInfo} \cite{KnotInfo} or a customized one in progress (very much in spirit of \cite{LaTuVa-big-data,TubbenhauerZhang-bigdata-quantum-invariants,KelomakiLacabanneTubbenhauerVazZhang-detection-probabilities,DranowskiKabkovTubbenhauer-knot-detection}), to obtain a candidate knot label for independent verification.
The hope is that once the neighbor knot type is identified, its unknotting number might already be tabulated, yielding immediate context for the ``distance one'' landscape around $K$.

\end{enumerate}

The crucial difference between the two is: (A) is a purely diagram search, fully using the unknotter. (B) uses the unknotter and a database.

\begin{Remark}
Note that using the unknotter is key for both. For (A) this is clear, but even for (B) this is crucial as the generated diagrams have a crossing number outside of any known database, so we need to reduce the crossing number first.
\end{Remark}

\subsection{Why this is hard}\label{subsec:cc-hard}

At a naive level, searching for $m$ crossing changes is combinatorially explosive: a diagram with $n$ crossings has $\binom{n}{m}$ possible $m$-tuples of crossings to flip, and inflation typically increases $n$ substantially.
Moreover, diagram inflation is a double-edged sword: it increases the number of potential crossing change opportunities, but it also introduces many nugatory or near-nugatory crossings and can create large regions of the diagram where local changes have little global effect.

Three further issues are specific to our setting:
\begin{itemize}
\item \textit{Non-monotonicity.} Even if a crossing change pattern does unknot the diagram, verifying this may require long non-monotone simplification sequences (cf. \autoref{subsec:hardness}), so a simplistic verifier can produce false negatives.
\item \textit{Diagram dependence.} The existence of a diagram unknotted by $m$ crossing changes is a statement about the knot type, but the difficulty of finding it is strongly diagram-dependent: some diagrams are ``friendly'' to small crossing change witnesses, others hide them behind extensive Reidemeister rearrangements.
\item \textit{Identification limits (for the one-flip pipeline).} The Jones polynomial is fast and robust to compute, but it is not a complete invariant; moreover database conventions require handling mirroring and $q$--shifts carefully, and even when a knot is identified, its unknotting number may be unknown or absent from the relevant census. And, crucially, the databases only go to roughly the fifteen crossing range.
Consequently, the one-flip pipeline should be read as a naming/triage tool rather than a certification procedure.
\end{itemize}
This is precisely where the unknotter becomes useful: it provides a fast heuristic for navigating a search space in which both the candidate generation (inflation + flips) and the verification/simplification are highly nontrivial.

\subsection{Crossing change sweeps, and the Jones-identification workflow}\label{subsec:additivity-eval}

This subsection records the concrete evaluation workflow used in our experiments.
All code and intermediate artifacts referenced below are contained in \cite{DKT}, and the individual witnesses produced by the sweeps (base PD, flipped crossings, and the best crossing number found) are exported as \texttt{CSV/JSONL/TXT} files.

The repository contains four pieces of information that are directly useful for writing a fully reproducible evaluation section:
\begin{itemize}
\item \textit{A fixed evaluation driver.}
The script \texttt{crossing-reduction/run\_variant\_sweep.py} implements a parallel sweep over \emph{crossing change variants}:
for each input PD diagram with $c$ crossings and a prescribed number of flips $m$, it enumerates all $\binom{c}{m}$ choices of $m$ crossings, applies those $m$ flips, and then runs the trained unknotter for a fixed step budget $T$.
Each variant is recorded with the fields
\begin{gather*}
\texttt{variant\_id},\ \texttt{source\_index},\ \texttt{original\_crossings},\\
\texttt{flipped\_indices},\ \texttt{episodes},\ \texttt{deterministic},\\
\texttt{rl\_unknot\_success},\ \texttt{min\_crossings\_found},\ \texttt{steps\_taken\_total}.
\end{gather*}

\item \textit{A PD-pool generator tailored to connected sums.}
The script called \texttt{crossing-reduction} \texttt{generate\_t\_backtrack\_pool.py} constructs a fixed base diagram
\[
T \;=\; K_1 \# K_2
\]
from two hard-coded PD codes (in the current snapshot, $K_1$ is a $4$ crossing diagram and $K_2$ a $9$ crossing diagram; this is the same pair used in \texttt{rluntanglenumber.ipynb}).
It then produces a large, deduplicated pool of inflated diagrams of $T$ by applying random \texttt{backtrack} steps together with a small number of pure type~III shuffles, keeping only diagrams whose crossing number lies in a prescribed window.
Importantly, the generator tracks \emph{combinatorial coverage}: if a sampled diagram has $c$ crossings and we intend to sweep $m$ flips, it contributes $\binom{c}{m}$ variants towards a target coverage (e.g.\ $10^6$ variants).

\item \textit{Summaries for writing tables.}
The script \texttt{crossing-reduction/summarize\_success.py} then postprocesses a sweep directory and writes descriptive summaries
(success rates per base PD, most common successful flip patterns, and enrichment statistics for flip indices).
These outputs are designed to be dropped into a paper as tables/figures with minimal manual work.

\item \textit{Jones-based identification for the $m=1$ sweep.}
After running the sweep with $m=1$, the notebook \texttt{uncrossing.ipynb} postprocesses the resulting minimal PDs:
it reads the script \texttt{OUT\_DIR/min\_pd\_results\_flips1.jsonl}, computes the Jones polynomial for each record, and writes
\texttt{OUT\_DIR/min\_pd\_results\_flips1\_jones.jsonl}.
It then converts each polynomial to a KnotInfo-style \emph{Jones vector}
\[
[\,e_{\min},e_{\max},c_{e_{\min}},c_{e_{\min}+1},\dots,c_{e_{\max}}\,],
\]
including internal zeros, and writes both
\[
\texttt{OUT\_DIR/min\_pd\_results\_flips1\_jones\_vec\_knotinfo.jsonl}
\]
and a line-based version
\[
\texttt{OUT\_DIR/min\_pd\_results\_flips1\_jones\_vec\_knotinfo.txt}.
\]
Finally, it matches these vectors against \texttt{knotinfo\_data\_complete.xls} (which is stored in \texttt{PROJECT\_DIR})
using the original matcher, which allowed overall $q$-shifts and mirroring, and writes candidate matches to
\texttt{OUT\_DIR/knotinfo\_match\_results.jsonl}. Arbitrary monomial shifts are not a valid equivalence for normalized Jones polynomials. Such historical matches are only candidates and require recomputation with fixed normalization and an independent identification check.
\end{itemize}
Indeed, since additivity is false in general, one should expect that if a connected sum admits an unexpectedly short unknotting sequence, it may be realized only on a \emph{nonstandard} diagram obtained after substantial Reidemeister rearrangement.
This motivates precisely the workflow implemented in the repository: inflate the connected-sum diagram into a large and diverse pool of equivalent diagrams, then conduct an exhaustive sweep over $m$ crossing changes on each pooled diagram, using the unknotter as the verifier.
The additional $m=1$ Jones-identification postprocessing complements this by assigning candidate knot labels to one-flip outputs whenever possible.

\subsection{Case study: $4_1\#9_{10}$}\label{subsec:case-4_1-9_10}

We ran both pipelines on the connected sum $K=4_1\#9_{10}$. The knot itself is
\begin{gather*}
\includegraphics[height=8cm]{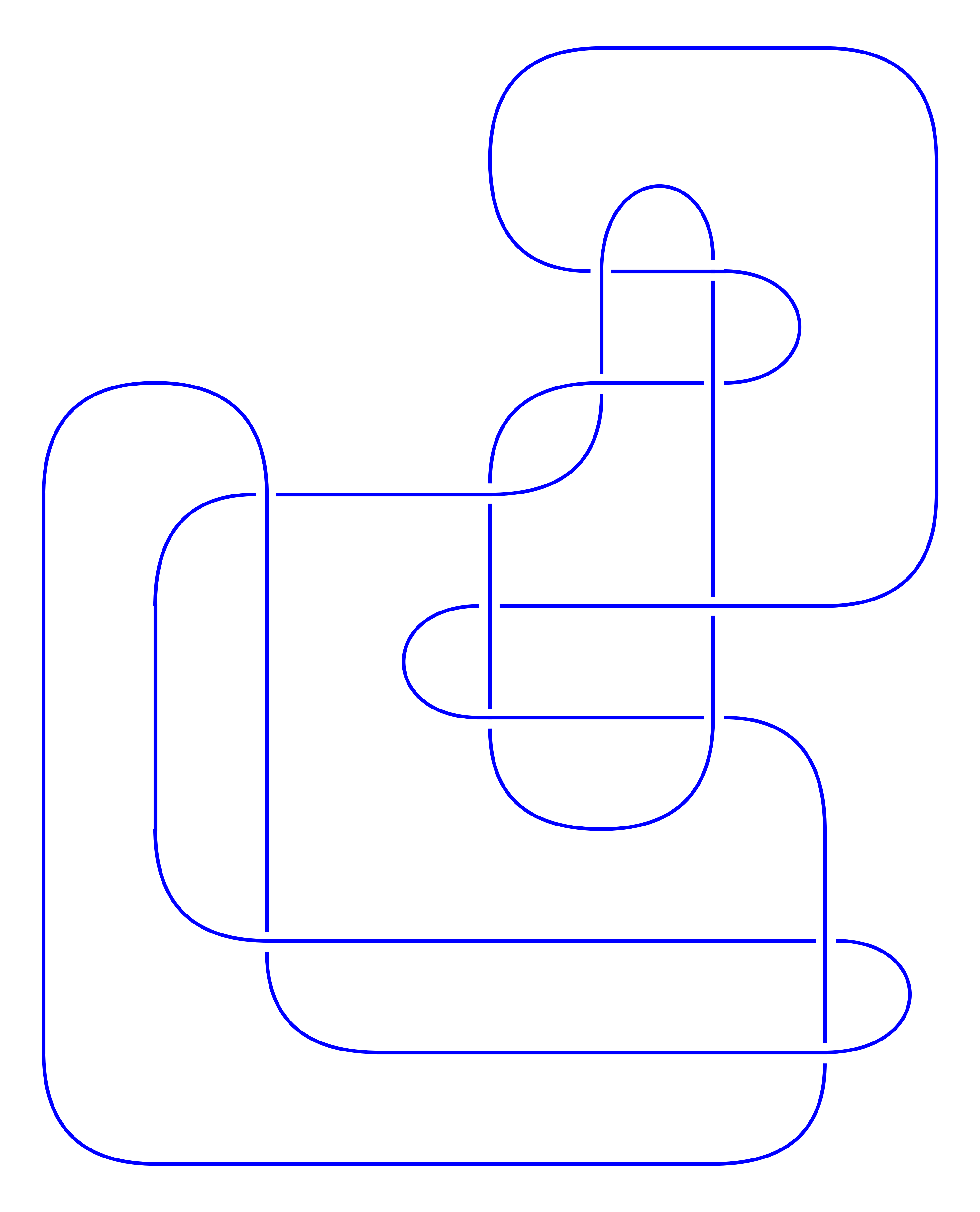}
\end{gather*}
and an example of an inflation with 25 crossings is
\begin{gather}\label{Eq:15a}
\begin{gathered}
\includegraphics[height=8cm]{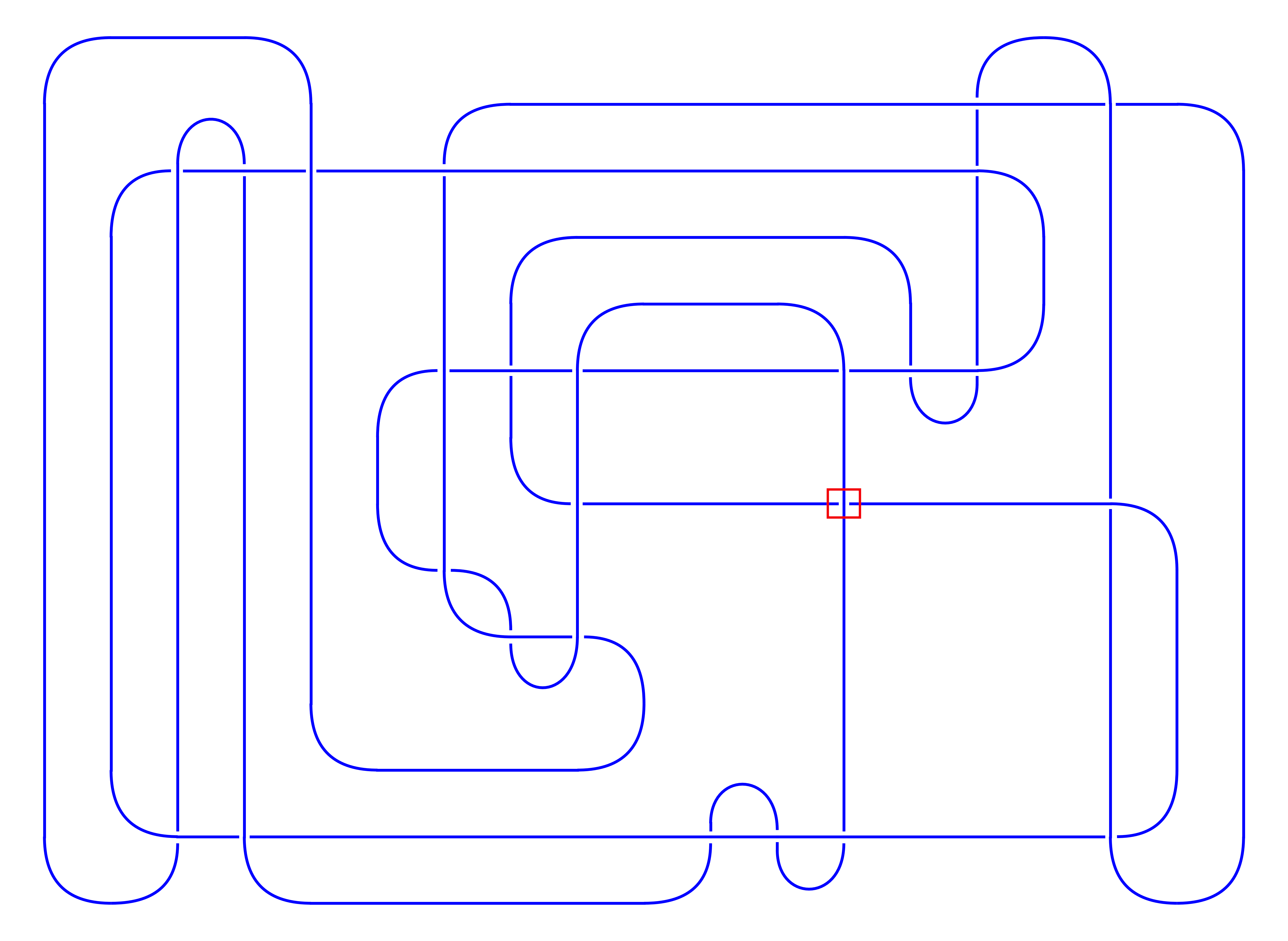}
\\
\text{PD: }[[23, 13, 24, 12], [13, 23, 14, 22], [15, 10, 16, 11], [11, 14, 12, 15], [9, 33, 10, 32], \\ [5, 43, 6, 42], [3, 29, 4, 28], 
[1, 31, 2, 30], [47, 25, 48, 24], [39, 9, 40, 8], \\ [31, 5, 32, 4], [29, 3, 30, 2], [27, 1, 28, 0], [16, 33, 17, 34], 
[19, 39, 20, 38], \\ [20, 43, 21, 44], [21, 47, 22, 46], [48, 25, 49, 26], [49, 27, 0, 26], \\ [18, 35, 19, 36], [17, 35, 18, 34],
[37, 45, 38, 44], [36, 45, 37, 46], [41, 7, 42, 6], [40, 7, 41, 8]].
\end{gathered}
\end{gather}
(This is Python output, which starts counting at zero.)
If one changes crossing index $16$, that is, the seventeenth crossing (marked) and runs the unknotter, one gets the 15 crossing diagram
\begin{gather}\label{Eq:15}
\begin{gathered}
\includegraphics[height=6cm]{figs/41910c}
\\
\text{PD: }[[13, 21, 14, 20], [19, 15, 20, 14], [17, 22, 18, 23], [21, 18, 22, 19], \\ [23, 7, 24, 6], [25, 5, 26, 4], [27, 11, 28, 10], \\
[29, 9, 0, 8], [7, 27, 8, 26], [9, 29, 10, 28], [11, 1, 12, 0], [16, 3, 17, 4], \\ [2, 15, 3, 16], [1, 13, 2, 12], [5, 25, 6, 24]].
\end{gathered}
\end{gather}
This will appear in (B) below.

\begin{Remark}
We want to highlight again the ``surprising'' crossing flip as marked in the diagram \autoref{Eq:15a}; see also \autoref{R:Stuck}.
\end{Remark}

\medskip
\noindent\textbf{(A) Crossing change search.}
This direct search did not produce a three-change unknotting witness. A trefoil after three changes would give an upper bound of four. A Hopf link output cannot arise from a knot using crossing changes and Reidemeister moves, since these preserve the number of components; any such output must be excluded pending an audit of the implementation.
\begin{gather*}
\includegraphics[height=6cm]{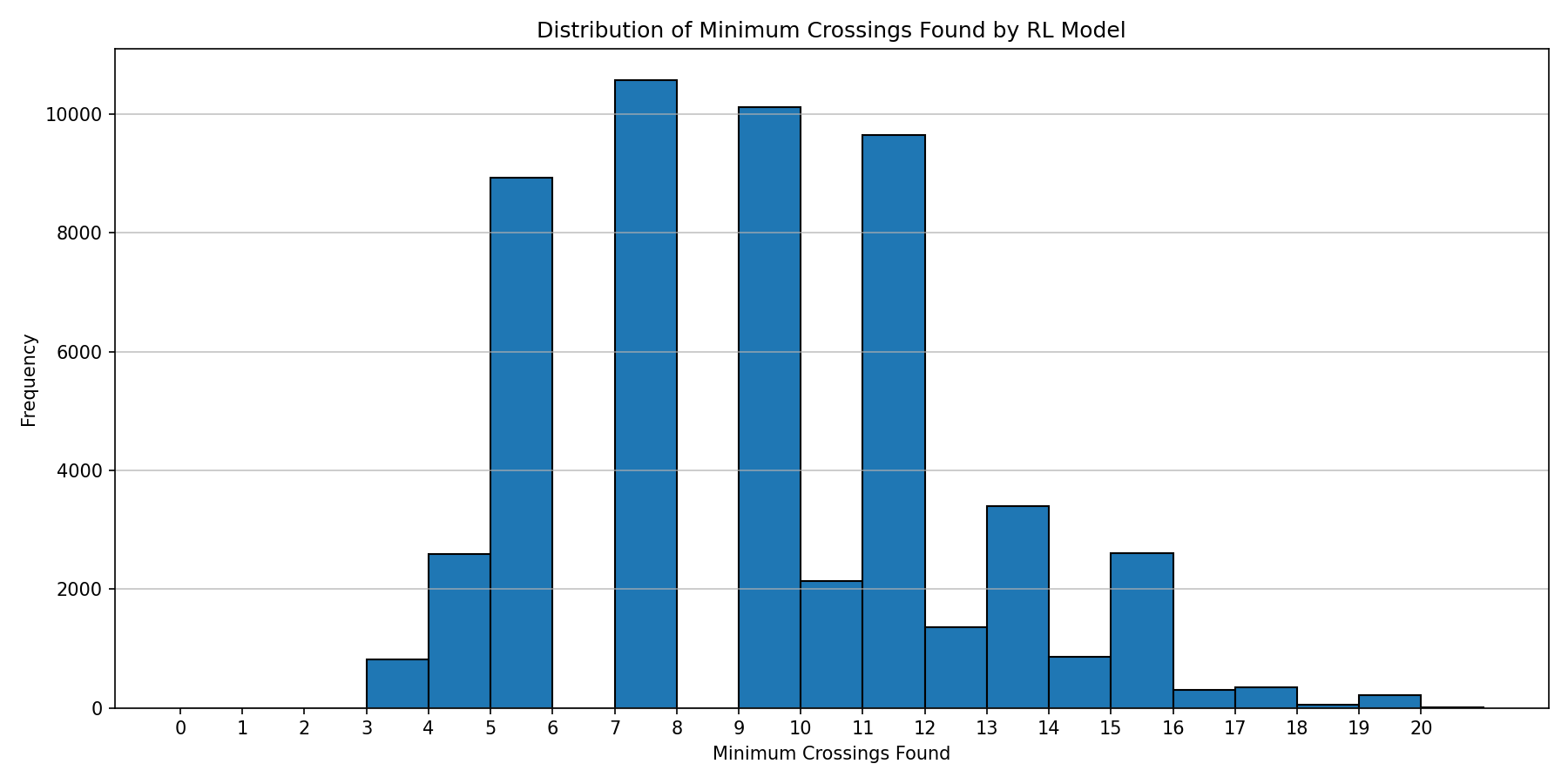}
\end{gather*}
The picture shows how many knot diagrams (of 53950 in total) had a given crossing number after running the unknotter.
In other words, this was not successful. This could mean that diagrams that allow three flips are very rare in this crossing range, or even need a higher crossing number. In any case, studying the distribution as above is an interesting question in its own right.

\medskip
\noindent\textbf{(B) One-flip identification via the Jones polynomial.}
Running the $m=1$ sweep and then the postprocessing in \texttt{uncrossing.ipynb}, the original matching block proposes, using mirroring and monomial-shift-matching, the KnotInfo knot
\[
15n4866
\]
as a candidate for the diagram in \autoref{Eq:15}
via the KnotInfo-style Jones vector
\[
[-7,\,2,\,-2,\,4,\,-6,\,9,\,-9,\,9,\,-8,\,6,\,-3,\,1],
\]
equivalently the Jones polynomial
\[
-2q^{-7}+4q^{-6}-6q^{-5}+9q^{-4}-9q^{-3}+9q^{-2}-8q^{-1}+6-3q+q^2.
\]
If this identification is independently confirmed and the target bound $u(15n4866)\leq2$ is supplied, the crossing relation gives $u(4_1\#9_{10})\leq3$. The original search reported this candidate twice among 100 inflated diagrams.

\begin{Remark}
One future goal is to use the database explained in \autoref{sec:upper bounds} below to get more examples of ``unexpected'' unknotting numbers.
\end{Remark}

\section{A self-improving upper bound pipeline}\label{sec:upper bounds}

The experiments above suggest a natural large-scale follow-up: instead of focusing on a single knot, one can use the unknotter as a bulk search heuristic for improving unknotting number upper bounds across an entire data table. We have a notebook, included with the project materials, that implements exactly this idea in a self-improving way. The point is not merely to run more examples, but to organize the computation so that the data source used for the search is also updated automatically whenever a genuine improvement is found.

\begin{Remark}
The notebook and the results are available on \cite{DKT}.
\end{Remark}

\subsection{The basic idea}

The notebook takes as input a workbook such as \texttt{unknotting.xlsx}. It then searches for entries whose unknotting number is not yet exact, but only known to lie in a range $[\ell,u]$ ($\ell$ = lower bound and $u$ = upper bound). For each such knot, it reads a PD presentation from the workbook, fills in missing Jones vector data when feasible, generates inflated variants of the given diagram, performs all single crossing changes on the chosen variants, applies preliminary simplification and, when needed, the RL reducer, then filters targets using Jones, Alexander, hyperbolic volume and $V_2$. A surviving match gives a candidate upper bound; the results reported here additionally have independent identification checks. The update rule is the obvious one: if a single crossing change leads to a knot with unknotting number at most $m$, then the original knot has unknotting number at most $m+1$.
\begin{gather}\label{Eq:11a14}
\begin{gathered}
\includegraphics[height=6cm]{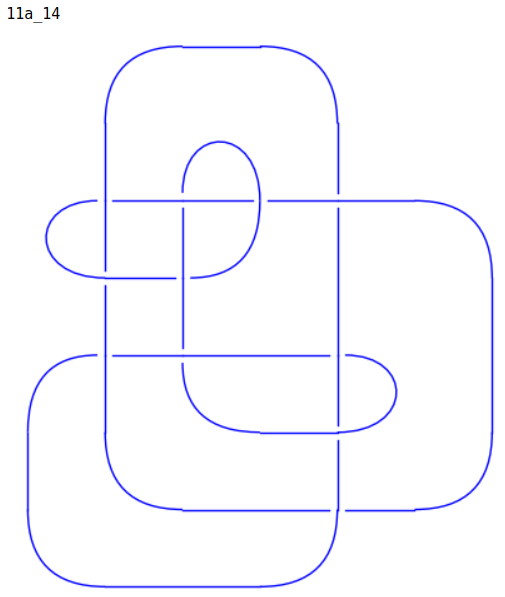}
\\
\to \text{add crossings (up to 20 crossings in total)}
\\
\to \text{flip a crossing} \to \text{reduce} \to \text{identify on the list.}
\end{gathered}
\end{gather}
In slightly informal terms, the workflow is this:
\begin{itemize}
\item start with a knot whose unknotting number is only known to lie in a range,
\item generate suitable inflated diagrams and test all single crossing changes,
\item simplify the resulting diagrams with the unknotter,
\item filter candidate targets by the four invariants and check the identification for the certificate,
\item and write any improved upper bound back into the same workbook.
\end{itemize}

\subsection{Why the workflow is conservative}

The search uses a cascade of the Jones and Alexander polynomials, hyperbolic volume and the two-variable polynomial $V_2$. In our experience this filtering was somewhat faster than repeatedly calling SnapPy identification routines; this is an implementation observation, not a controlled timing comparison. Polynomial normalizations and mirror conventions are fixed, and missing required invariant data prevent acceptance by this route. For the certificates, we use SnapPy identification checks, requiring meridian-preserving isometries. Agreement of the four invariants alone is not treated as a proof of knot equivalence.

\begin{Remark}
Our set of invariants achieve about 99.6\% detection rate on the dataset, cf. \cite{TubbenhauerZhang-bigdata-quantum-invariants,KelomakiLacabanneTubbenhauerVazZhang-detection-probabilities} and the related websites. This can be easily improved by using a stronger invariant, but it was sufficient for us.
\end{Remark}

Two practical points are worth stressing.
\begin{itemize}
\item The crossing cutoff limits the candidate search, but does not itself establish a knot identification.
\item Target upper bounds must have independent support. An uncertified inherited workbook bound cannot serve as the endpoint of a certificate.
\end{itemize}

\begin{gather*}
\includegraphics[height=6cm]{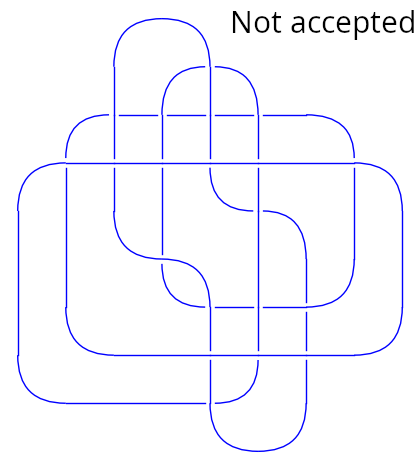}
\end{gather*}

\subsection{Why the pipeline is self-improving}

Another useful design choice is that the workbook is updated in place. In other words, the calculation is self-improving: once an improved upper bound is found, the corresponding entry in the same workbook is overwritten immediately, so later runs continue from the improved state rather than from stale data. If an entry originally reads $[\ell,u]$ and the notebook finds a better upper bound $u'<u$, then the entry is replaced by $[\ell,u']$. In particular, if one arrives at $u'=\ell$, then the range collapses and the unknotting number is determined exactly. The corrected workflow keeps workbook backups and separate result and match-audit-logs, so changes can be traced and earlier bounds restored.

\begin{Remark}
This bulk-improvement pipeline should be viewed as complementary to the more focused experiments from \autoref{sec:crossing change}. There the goal was to exhibit explicit witnesses for a specific knot, whereas here the goal is systematic database improvement. The computational philosophy is the same in both cases: allow temporary diagrammatic complexity via inflation, use the unknotter to navigate the resulting search space, and extract topological information only after substantial simplification has taken place.
\end{Remark}

\subsection{Current status}\label{subsec:upper bounds-placeholder}

The corrected run gives fifteen independently checked crossing change witnesses, listed in \autoref{tab:upper-bound-current}. Each witness leads to a knot with unknotting number one and hence gives an upper bound of two. The old intervals refer to the input snapshot used for the run (15 March 2026), rather than to the latest version of KnotInfo. In particular, we retain $12a{262}$ and $13n{489}$, whose unknotting number two is already recorded in KnotInfo as of 8 September 2026.

\begin{table}[ht]
\centering
\begin{tabular}{llll}
\toprule
Knot & old & resulting interval & knot after one change \\
\midrule
$12a{262}$ & $[2,3]$ & $2$ & $6_{3}$ \\
$13n{45}$ & $[2,3]$ & $2$ & $10_{23}$ \\
$13n{80}$ & $[2,3]$ & $2$ & $10_{95}$ \\
$13n{221}$ & $[1,3]$ & $[1,2]$ & $10_{84}$ \\
$13n{489}$ & $[1,3]$ & $2$ & $10_{131}$ \\
$13n{636}$ & $[1,3]$ & $[1,2]$ & $10_{91}$ \\
$13n{1439}$ & $[1,3]$ & $[1,2]$ & $9_{27}$ \\
$13n{2251}$ & $[1,3]$ & $[1,2]$ & $11a{77}$ \\
$13n{2379}$ & $[2,3]$ & $2$ & $11a{345}$ \\
$13n{2639}$ & $[1,3]$ & $[1,2]$ & $11a{146}$ \\
$13n{2809}$ & $[2,3]$ & $2$ & $9_{22}$ \\
$13n{2907}$ & $[2,3]$ & $2$ & $10_{133}$ \\
$13n{3033}$ & $[2,3]$ & $2$ & $12n{646}$ \\
$13n{3589}$ & $[2,3]$ & $2$ & $10_{131}$ \\
$13n{4025}$ & $[1,3]$ & $[1,2]$ & $10_{26}$ \\
\bottomrule
\end{tabular}
\caption{Checked upper bounds from the corrected run, compared with its input snapshot. The resulting intervals incorporate the known lower bounds; for $13n{489}$ we use the updated lower bound two. Knot names in the last column are understood up to mirroring.}
\label{tab:upper-bound-current}
\end{table}

Together with the known lower bounds, these witnesses give nine exact values and six intervals $[1,2]$. The identifications were checked using SnapPy isometries that preserve meridians, both for the source diagrams and for the diagrams after the crossing change. These are numerical isometry checks, not interval-certified computations. The target upper bounds can also be checked by explicit single crossing changes followed by simplification to the unknot. All fifteen witnesses were found in the simplification stage before the RL reducer was called.

\begin{Remark}
The comparison is with a fixed input snapshot: a later update of KnotInfo does not remove a witness from the table. The table reports the fifteen checked witnesses retained from the fixed results archive; subsequent search results are outside this comparison.
\end{Remark}

Let us study $K=13n{2639}$ in detail. Its starting diagram $D$ has the following thirteen-crossing PD code:
\begin{gather*}
\begin{gathered}
\relax [[3,1,4,26],[1,9,2,8],[9,3,10,2],[4,18,5,17],\\
\relax [18,6,19,5],[6,15,7,16],[14,7,15,8],[10,21,11,22],\\
\relax [22,11,23,12],[12,23,13,24],[24,13,25,14],[16,20,17,19],\\
\relax [20,25,21,26]],
\end{gathered}
\\
\raisebox{-3cm}{\includegraphics[height=6cm]{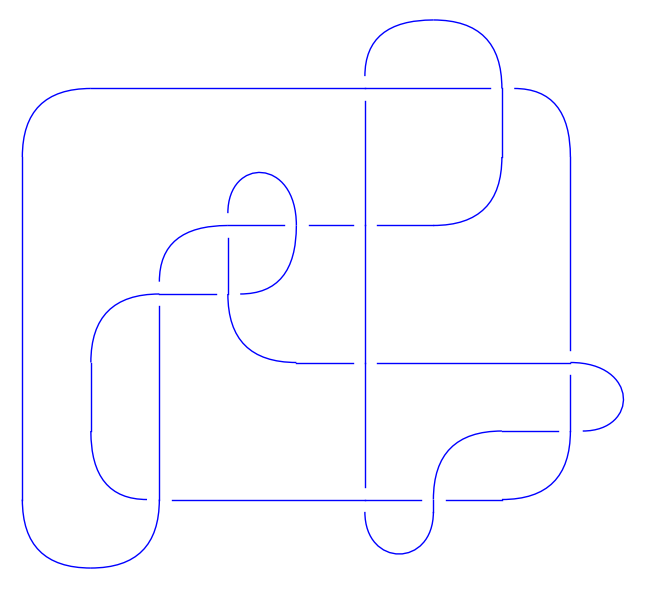}}
.
\end{gather*}
We first check why an increase in crossings is needed for this starting diagram. Changing each of its thirteen crossings gives the following knots, up to mirroring. Here and below, crossing indices start at zero and refer to the displayed order of the quadruples.
\begin{center}
\begin{tabular}{lll}
\toprule
crossing indices in $D$ & resulting knot & unknotting number \\
\midrule
$0$ & $12a{649}$ & $[2,3]$ \\
$1,2$ & $10_{12}$ & $2$ \\
$3,4$ & $10_{126}$ & $2$ \\
$5,6$ & $10_{48}$ & $2$ \\
$7,8,9,10$ & $11n{106}$ & $2$ \\
$11$ & a nontrivial connected sum & $\geq2$ \\
$12$ & $11a{109}$ & $2$ \\
\bottomrule
\end{tabular}
\end{center}
The prime knot bounds in this table are recorded in KnotInfo \cite{KnotInfo}. At index $11$, the diagram splits into summands with determinants $3$ and $5$, so both summands are nontrivial. Its unknotting number is therefore at least two by Scharlemann's theorem that unknotting-number-one-knots are prime. See \cite{Scharlemann-prime}.

Thus no crossing change on $D$ leads to a knot with unknotting number at most one. Moreover, $D$ has no available Reidemeister III move. Since $D$ is a minimal diagram, no decreasing Reidemeister I or II move is available either. Consequently, a Reidemeister-move-search starting from $D$ must first increase the crossing number before it can expose an improving first crossing change.

The saved witness uses the following twenty-four crossing diagram $D'$ of the same knot:
\begin{gather*}
\begin{gathered}
\relax [[6,44,7,43],[0,18,1,17],[20,2,21,1],[7,35,8,34],\\
\relax [35,11,36,10],[13,32,14,33],[29,16,30,17],[25,38,26,39],\\
\relax [39,26,40,27],[27,40,28,41],[41,28,42,29],[33,37,34,36],\\
\relax [37,42,38,43],[47,12,0,13],[46,12,47,11],[8,10,9,9],\\
\relax [21,2,22,3],[24,6,25,5],[44,20,45,19],[45,18,46,19],\\
\relax [3,22,4,23],[4,24,5,23],[14,32,15,31],[15,30,16,31]],
\end{gathered}
\\
\raisebox{-3cm}{\includegraphics[height=6cm]{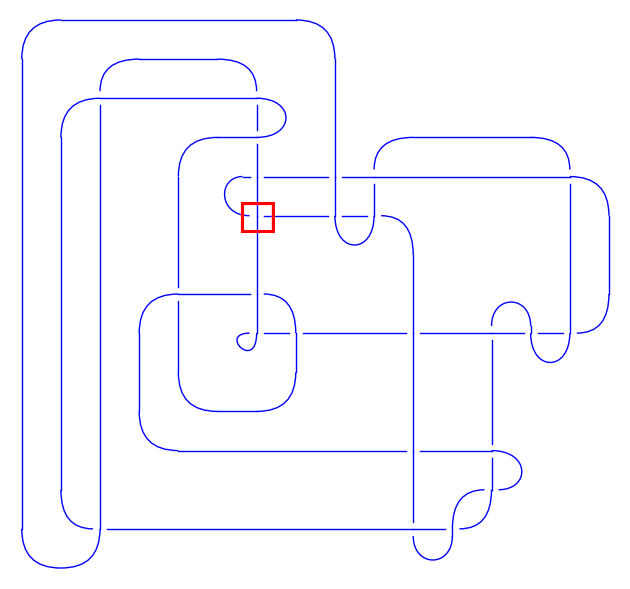}}
.
\end{gather*}
Change crossing $14$ (marked above), that is, the fifteenth quadruple, by replacing
\[
\relax [46,12,47,11]\quad\longmapsto\quad[11,46,12,47].
\]
Every other quadruple stays unchanged. 

\begin{Remark}\label{R:Stuck3}
With reference to \autoref{R:Stuck}, we point the readers attention again to the remarkable crossing change.
\end{Remark}

After simplification, the resulting diagram has PD code
\begin{gather*}
\begin{gathered}
\relax [[0,8,1,7],[2,18,3,17],[18,4,19,3],[5,14,6,15],\\
\relax [13,6,14,7],[9,20,10,21],[21,10,22,11],[11,22,12,23],\\
\relax [23,12,24,13],[16,20,17,19],[25,4,0,5],[8,2,9,1],\\
\relax [15,24,16,25]],
\end{gathered}
\\
\raisebox{-3cm}{\includegraphics[height=6cm]{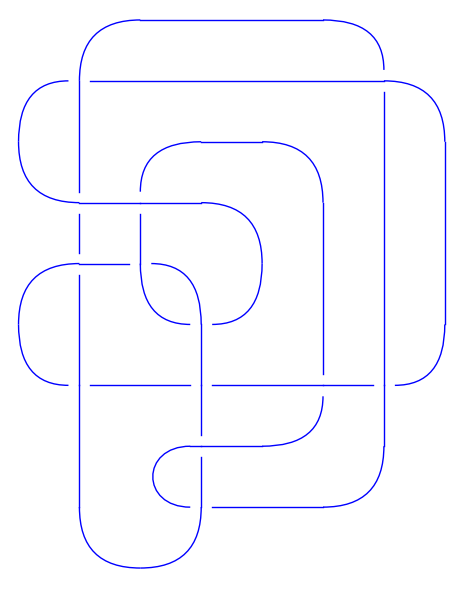}}
.
\end{gather*}
The preceding diagram represents $11a{146}$ up to mirroring, as checked by a meridian-preserving SnapPy isometry. Since $u(11a{146})=1$, we obtain
\[
u(13n{2639})\leq 1+u(11a{146})=2,
\]
improving the input interval $[1,3]$ to $[1,2]$.

For completeness, a standard eleven-crossing diagram of the target is
\begin{gather*}
\begin{gathered}
\relax [[2,0,3,21],[0,9,1,10],[8,1,9,2],[14,3,15,4],\\
\relax [4,15,5,16],[16,5,17,6],[6,19,7,20],[12,8,13,7],\\
\relax [10,18,11,17],[18,12,19,11],[20,13,21,14]],
\end{gathered}
\\
\raisebox{-3cm}{\includegraphics[height=6cm]{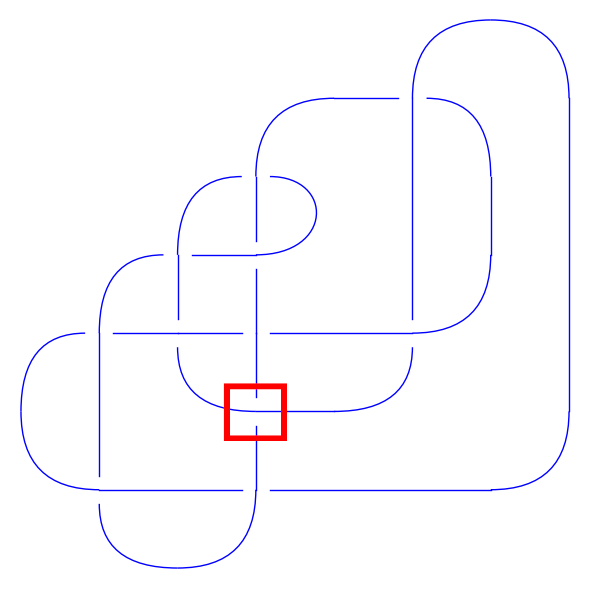}}.
\end{gather*}
Changing its crossing index $6$, that is, the seventh crossing (marked above), namely replacing $[6,19,7,20]$ by $[19,7,20,6]$, gives a diagram that simplifies to the unknot.

\begin{Remark}
The need to increase crossings is established for the particular starting diagram $D$. We do not claim that every minimal diagram of $13n{2639}$ has this property, or that twenty-four crossings are necessary. The example shows concretely how inflation makes an improving crossing change accessible from a starting diagram on which the one change search is blocked.
\end{Remark}

\end{document}